\newtheorem{theorem}{Theorem}
\newtheorem{corollary}[theorem]{Corollary}
\newtheorem{definition}[theorem]{Definition}
\newtheorem{lemma}[theorem]{Lemma}
\newtheorem{proposition}[theorem]{Proposition}
\newtheorem{remark}[theorem]{Remark}
\newenvironment{proof}[1][Proof]{\textbf{#1.} }{\ \rule{0.5em}{0.5em}}
\newcommand{\kom}[1]{}
\renewcommand{\kom}[1]{{\bf [#1]}}
\definecolor{blau}{rgb}{0.1,0.0,0.9}
\newcounter{komcounter}
\numberwithin{komcounter}{section}
\def\C{\mathcal C}
\def\e{\epsilon}
\def\I{\mathcal I}
\def\M{\mathcal M}
\def\O{\mathcal O}
\def\P{\mathcal P}
\def\R{\mathbb R}
\def\S{\mathcal S}
\def\0{\boldsymbol 0}
\def \subjet{\P^{2,-}_{ \Omega}}
\def \supjetbar{\bar \P^{2,+}_{\Omega}}
\def \subjetbar{\bar \P^{2,-}_{\Omega}}
\def \supjetbarbar {\bar \P^{2,+}_{\bar  \Omega}}
\begin{document}

\title{Systems of fully nonlinear parabolic obstacle problems \\with Neumann boundary conditions}

\author{Niklas L.P. Lundstr\"om, Marcus Olofsson\thanks{%
Department of Mathematics and Mathematical Statistics, Ume\aa\ University,
SE-901 87 Ume\aa , Sweden. E-mail: niklas.lundstrom@umu.se, marcus.olofsson@umu.se}}
\maketitle

\begin{abstract}
\noindent
We prove the existence of a unique viscosity solution to certain systems of fully nonlinear parabolic partial differential equations with interconnected obstacles in the setting of Neumann boundary conditions. The method of proof builds on the classical viscosity solution technique adapted to the setting of interconnected obstacles and construction of explicit viscosity sub- and supersolutions as bounds for Perron's method. Our motivation stems from so called optimal switching problems on bounded domains.
\medskip

\noindent \textit{Keywords:}
Optimal switching; multi modes switching; constraints; fully nonlinear PDE; Neumann boundary condition; viscosity solution; barrier

\noindent
{\em Mathematics Subject Classification:}
35B50, 35D40, 35Q93, 49L25.
\end{abstract}
%
%
%
%
%

\setcounter{equation}{0} \setcounter{theorem}{0}

\section{Introduction}

This note deals with existence and uniqueness of viscosity solutions of systems of non-linear partial differential equations (PDE) with interconnected obstacles and Neumann boundary conditions.
Let $\Omega \subset\mathbb{R}^{n}$ be a domain, i.e. an open connected set,
and let $i\in \{1,2,...,m\}$ for some positive integer $m$.
The problem considered can in general be stated as
\begin{align} \label{problem}	
\min  \left\{ \partial _{t}u_{i}\left(t,x\right) + {F}_i\left(t,x,u_i(t,x), Du_{i}, D^{2}u_{i} \right),  u_{i}(t,x) - \M_i u(t,x) \right\} &= 0 && \text{in} \quad (0,T)\times\Omega,\notag\\
u_{i}(0,x) &= g_{i}\left( x\right) && \text{on} \quad \bar \Omega, \tag{BVP} \\
B_i\left(t,x,u_i(t,x),Du_{i} \right) &= 0  &&\text{on} \quad (0,T)\times\partial\Omega.\notag
\end{align}
%
The solution is a vector-valued function $u:=(u_1,u_2, \dots,u_m)$
where the components are interconnected through the obstacle $\M_\cdot$ in the sense that component $i$ lies above the obstacle
$$
\M_iu(t,x) := \max_{i\neq j }\{u_j(t,x) -c_{ij}(t,x)\},
$$
which itself depends on all other components. Here and in the following, $Du_i \in \R^n$ and $D^2u_i \in \S^n$ ($\S^n$ being the set of symmetric $n \times n$ matrices) denote the gradient and Hessian matrix of $u_i$ w.r.t. $x$.

In the standard setting of a single PDE without obstacle,
the well-renowned paper Crandall-Ishii-Lions \cite{CIL92} proves existence and uniqueness of viscosity
solutions of fully non-linear elliptic PDEs with Neumann boundary conditions in smooth domains satisfying an exterior ball condition.
Dupuis-Ishii \cite{DI90, DI91} consider more general domains such as non-smooth boundaries and domains with corners, and
Barles \cite{B93} proved a comparison principle and existence of unique solutions to degenerate elliptic
and parabolic boundary value problems with non-linear Neumann type boundary conditions
in bounded domains with $W^{3,\infty}$-boundary.
Ishii and Sato \cite{IS04} proved similar theorems for
boundary value problems for some singular degenerate parabolic partial differential equations
with non-linear oblique derivative boundary conditions in bounded $C^1$-domains.
Further, in bounded domains with $W^{3,\infty}$-boundary, Bourgoing [4] considered singular degenerate parabolic
equations and equations having $L^1$-dependence in time.
Lundstr\"om-\"Onskog \cite{LO19} recently extended parts of \cite{DI90} by establishing existence and uniqueness of viscosity solutions to a general parabolic PDE in non-smooth, time-dependent domains.

For systems of variational inequalities with interconnected obstacles as in \eqref{problem},
the literature on bounded spatial domains $\Omega$ with boundary conditions is, to the authors' knowledge, scarce.
Instead much focus has been directed to unbounded domains (essentially $\Omega \equiv \mathbb R^n$) and often linear PDEs.
In this setting the literature is, on the contrary, rather rich, see, e.g.,
El--Asri-Hamadene \cite{EH09},
Djehiche-Hamadene-Popier \cite{DHP10},
Hu-Tang \cite{HT10},
Biswas-Jakobsen-Karlsen \cite{BJK10},
Hamadene-Morlais \cite{HM13},
Lundstr\"om-Nystr\"om-Olofsson \cite{LNO14a, LNO14b},
Lundstr\"om-Olofsson-\"Onskog \cite{LOO19},
Fuhrman-Morlais \cite{FM20},
Reisinger-Zhang \cite{RZ20}
and the many references listed therein. Extensions to more general operators have been studied recently in Klimsiak  \cite{K19}. Moreover, it has come to our attention that Boufoussi-Hamadene-Jakani \cite{BHJ22} recently considered a similar setting and prove existence and uniqueness of a viscosity solution to a system of PDEs with interconnected obstacles and Neumann boundary conditions. In contrast to the present paper, their method is based on a probabilistic formulation in terms of reflected backward stochastic differential equations.

A reason for this large amount of interest is the close connection to stochastic optimization and so called ``optimal switching'' problems. Under assumptions, the solution to \eqref{problem} then represents the value function of an optimal switching problem in which component $i$ of the underlying stochastic process has the second order operator $F_i$ as infinitesimal generator. With this interpretation, the domain $\Omega$ represents the constraints put on the underlying stochastic process and, in particular, $\Omega \equiv \mathbb R^n$ means no constraints.

However, when studying applications of optimal switching theory, one can very easily think of examples where constraints are not only natural, but also necessary for the model to be meaningful. For example, consider a hydro power plant with a dam in which the underlying (stochastic) process $X_t$ represents the amount of water currently available in the dam. Then, naturally, the process can only take values between $0$ and some $X_{max}$, the latter being the capacity of the dam. (In case the power plant is of run-of-river type, i.e., has no dam, the problem can be reduced to $\Omega =\mathbb R^n$ and an application of the theory in this setting was recently studied in Lundstr\"om-Olofsson-\"Onskog \cite{LOO20}.)
When the dam is full, additional increase of water must be spilled, keeping the amount of water in the dam constant. Such situation may be modelled through a reflection of the underlying stochastic process $X_t$. If this reflection is assumed in the normal direction, then one obtains a Neumann boundary condition, i.e.
\begin{align}\label{eq:BC}
B_i(t,x,u_i(t,x),Du_i) = \langle n(x), Du_i\rangle - f_i(x) = 0
\tag{BC}
\end{align}
where $n(x)$ is the normal of $\partial\Omega$ at $x$.
A more general reflection model puts us in the oblique derivative problem in which $n(x)$ in \eqref{eq:BC} should be replaced by
$\nu(x)$, a vector field satisfying $\langle \nu(x), n(x)\rangle > 0$ on $\partial\Omega$. The Dirichlet setting, which is studied in Barkhudaryan-Gomes-Shahgholian-Salehi \cite{BGSS20}, can be given a similar interpretation as above but then in the sense that the game ends when the process hits the boundary $\partial \Omega$.

Optimal switching problems with reflection is not as well-studied as the non-reflected counterpart, and the link to systems like \eqref{problem} is in general not yet rigorously established. However, given the close connection to real-world applications and the growing interest of optimal switching problems under constraints, see, e.g., Kharroubi \cite{K16}, we expect this to change and a thorough analysis of the related PDE-theory on bounded domains is motivated. Moreover, although the study is motivated by applications, the results are of independent mathematical interest, enlarging the class of problem for which the viscosity theory is fully investigated.

\setcounter{equation}{0} \setcounter{theorem}{0}

\section{Mathematical problem formulation, assumptions and main results}

\subsection{Notation and assumptions}

We let $\Omega \subset \R^n$ be an open connected bounded set with closure $\bar \Omega$ and boundary $\partial \Omega := \bar \Omega \setminus \Omega$ and consider $t \in [0,T]$.
We denote by $\mathcal C^{\alpha, \beta}$ the class of functions which are $\alpha$ and $\beta$ times continuously differentiable on
$[0,T] \times \bar \Omega$
w.r.t. the first (time $t$) and second (space $x$) variable, respectively.
If $f(\cdot)$ only depends on the spatial variable we simply write $f\in \mathcal C^\beta$. We assume that
\begin{equation} \label{ass:D1}
\mbox{$\partial \Omega \in \mathcal C^{1}$ and satisfies the exterior ball condition}, \tag{D1}	
\end{equation}
i.e.,
that $\partial \Omega$ is once continuously differentiable and that
$$
\mbox{for every $\hat x \in \partial \Omega$ there exist $x$ and $r>0$ s.t. $B(x,r) \not \subset \Omega$ and $\hat x \in \partial B(x,r)$},
$$
where $B(x,r):= \{y: |y-x|<r\}$. (A simple example of such a domain is $\partial \Omega \in \mathcal C^{1}$ with Lipschitz continuous derivative,
as such domains satisfy both the exterior and interior ball condition,
see Aikawa-Kilpel\"ainen-Shanmugalingam-Zhong \cite[Lemma 2.2]{AKSZ07}.)

We assume that $F_i(t,x,r,p,X) : [0,T] \times \bar\Omega \times \R \times \R^n \times \S^n \to \R$ is a continuous function satisfying
\begin{align} \label{ass:F1}
&\quad r \mapsto F_i(t,x,r,p,X)- \lambda \, r \, \mbox{is non-decreasing for some $\lambda \in \R$}, \tag{F1}\\
\label{ass:F2}
& \quad \mbox{$|F_i(t,x,r,p,X)-F_i(t,x,r,q,Y)| \leq \omega(|p-q| + ||X-Y||)$ for some $\omega:[0,\infty) \to [0,\infty)$} \notag \\
&\quad \mbox{with $\omega(0+)=0$} \tag{F2}
\end{align}
and that
\begin{equation} \label{ass:F3}
F_i(t,y,r,p,Y)-F_i(t,x,r,p,X) \leq \omega\left(\frac{1}{\e} |x-y|^2 + |x-y|(|p|+1)\right) \tag{F3}
\end{equation}
for every $t \in [0,T)$ fixed 
and whenever
$$
\left(
\begin{matrix}
   X    &0 \\
    0       & -Y
\end{matrix} \right) \leq
\frac{3}{\e} \left (
\begin{matrix}
   I    &-I \\
    -I       & I
\end{matrix} \right),
$$
where $I\equiv I^n$ denotes the $n \times n$ identity matrix.
Note that these assumptions on $F_i$ imply degenerate ellipticity.

Regarding the boundary condition $B_i$, we assume that
\begin{align}\label{ass:BC1}
& B_i(t,x,r,p):=\langle n(x),p\rangle + f_i(t,x,r)\notag \\
&\mbox{where $n(x)$ is the exterior unit normal at $x$ and $f_i$ is continuous on $[0,T]\times \partial \Omega \times \mathbb{R}$,} \notag \\
&\mbox{and non-decreasing in $r$  for every $(t,x) \in [0,T] \times \partial \Omega$.}\tag{BC1}
\end{align}

\noindent
Last, we assume that the obstacle functions $c_{ij}(t,x)$ are continuous on $[0,T]\times \bar \Omega$ and satisfy
\begin{align}\label{ass:O1}
 (i)&\quad \mbox{$c_{ij}(t,x) \in \mathcal C^{1,2} ( [0,T]\times \bar \Omega $}), \notag \\
 (ii)&\quad c_{ii}(t,x)=0\mbox{ for each $i\in\{1,\dots,m\}$}, \tag{O1}	
\end{align}
and that the so called ``no-loop''-condition holds;
\begin{align} \label{ass:O2}
&\quad \mbox{for any sequence of indices $i_1$,\dots, $i_k$, $i_j\in\{1,\dots,m\}$ for each $j\in\{1,\dots,k\}$,}\notag\\
 &\quad \mbox{we have $c_{i_1 i_2}(t,x)+c_{i_2 i_3}(t,x)+\dots+c_{i_{k-1} i_k}(t,x)+c_{i_k i_1}(t,x)>0$},\notag\\
 &\quad \mbox{for all $(t,x)\in [0,T] \times \bar \Omega$} \tag{O2}.
\end{align}
As with \eqref{ass:D1}, we need to strengthen the assumption on the obstacle slightly to prove existence. In particular, we then also demand that
\begin{equation}\label{ass:O3}
c_{ij}(t,x) + c_{jk}(t,x) \geq c_{ik}(t,x) \quad \mbox{for all $i,j,k \in \{1,2, \dots, n\}$} \tag{O3}
\end{equation}
which is an additional structural condition compared to $\eqref{ass:O2}$.

Last, we require that the initial data is continuous and compatible with the obstacle functions, in particular that
\begin{equation} \label{ass:compatibility}
\mbox{$g_i(x) \geq g_j(x)- c_{ij}(0,x)\;\;$ for any $i,j \in \{1,\dots,m\}$.} \tag{I1}
\end{equation}

\begin{remark} \label{remark:F1}
In the parabolic setting studied here, it is a standard task to show that one can assume $\lambda >0$ in \eqref{ass:F1} w.l.o.g.
Indeed, if $\lambda \leq 0$, then for $\bar \lambda  < \lambda$, $u(t,x)$ is a solution to \eqref{problem} if and only if $e^{\bar \lambda  t}u(t,x)$ is a solution to \eqref{problem} with $F_i$ and $f_i$ replaced by
\begin{equation*}
-\bar{\lambda}r+e^{\bar{\lambda}t}F_i(t,x,e^{-\bar{\lambda}t}r,e^{-\bar{\lambda%
}t}p,e^{-\bar{\lambda}t}X)\quad \text{and}\quad e^{\bar{\lambda}t}f_i(t,x,e^{-%
\bar{\lambda}t}r),
\end{equation*}
and with $c_{ij}$ similarly scaled. The function to the left in the above display is strictly increasing and hence satisfies $\eqref{ass:F1}$ with $\lambda$ positive. A consequence of this is that we may as well assume that $F_i$ is strictly increasing in $r$ and that there exists $\gamma >0$ s.t. for $r>s$
 \begin{equation} \label{eq:F1star}
 \gamma (r-s) \leq F_i(t,x,r,p,X)  - F_i(t,x,s,p,X). \tag{F1*}
 \end{equation}
\end{remark}

We are looking for solutions in the viscosity sense and will use the classical definitions of (parabolic) sub- and superjets, provided here for convenience.
\begin{definition}[Definition (8.1) of  \cite{CIL92}]
For  $(\hat t, \hat x) \in (0,T) \times \mathcal O  $, the triplet $(a,p,X) \in \R \times \R^n \times \S^n$ lies in the \textit{parabolic superjet} of $u$ at $(\hat t, \hat x)$, written $(a,p,X) \in \mathcal P^{2,+}_{\mathcal O} u(\hat t, \hat x)$, if
$$u(t,x) \leq u (\hat t, \hat x) + a (t-\hat t) + \langle p, x-\hat x\rangle + \frac{1}{2} \langle X(x-\hat x), x- \hat x\rangle + o(|t-\hat t| + |x- \hat x|^2)$$
as $(0,T) \times \mathcal O \ni (t,x) \to (\hat t, \hat x)$.
Analogously, the parabolic subjet is defined by $\mathcal P^{2,-}_{\mathcal O} u(\hat t, \hat x) := -\mathcal P^{2,+}_{\mathcal O} -u(\hat t, \hat x)$.

The closure of $\mathcal P^{2,\cdot}_{\mathcal O} u (\hat t, \hat x)$, denoted $\bar{\mathcal P}^{2,\cdot}_{\mathcal O} u(\hat t, \hat x)$, is defined as
\begin{align*}
\bar{\mathcal P}^{2,\cdot}_{\mathcal O} u(\hat t, \hat x) :=& \{ (a,p,X) \in \R \times \R^n \times \S^n : \exists (t_n,x_n,p_n,X_n)\in (0,T) \times \mathcal O \times  \R^n \times \S^n \\
&\mbox{s.t. } (a_n, p_n, X_n) \in \mathcal P^{2,\cdot}_{\mathcal O}u(t_n,x_n) \mbox{ and } (t_n,x_n, u(t_n,x_n), a_n, p_n, X_n) \to (\hat t, \hat x, u(\hat t, \hat x), a,p,X) \}.
\end{align*}
\end{definition}

\begin{definition} \label{def:solution}
A function $u=(u_1, \dots, u_m) \in USC([0,T]\times\bar \Omega)$ is a \textit{viscosity subsolution} to \eqref{problem} if, for all $i \in \{1,\dots,m\}$,
\begin{align*}
(i)&\quad \mbox{$\min \left\{a+ {F_i}(t,x,u(t,x), p,X),  u_{i}(x,t) - \M_i u(t,x) \right\} \leq 0,$}  \quad \mbox{$ (t,x) \in (0,T) \times \Omega, (a,p,X) \in \supjetbar u_i(t,x)$} \\
(ii)&\quad \mbox{$\min \left\{ a + {F_i}(t,x,u(t,x), p, X ),  u_{i}(t,x) - \M_i u(t,x) \right\}$}\wedge \mbox{$ \langle n(x), p \rangle + f_i(t,x,u(x)) \leq 0 $}, \\
&\quad  \mbox{$(t,x) \in (0,T) \times \partial \Omega, (a,p,X) \in \supjetbarbar u_i(t,x)$ } \\
(iii)& \quad \mbox{$u_i(0,x) \leq g_i(x)$, $x\in \bar \Omega$}
\end{align*}

Viscosity \textit{supersolutions} are defined analogously with $USC$ replaced by $LSC$, ${\bar \P^{2,+}_{\cdot}}$ replaced by ${\bar \P^{2,-}_{\cdot}}$, $\vee$ replaced by $\wedge$, and $\leq$ replaced by $\geq$. A function $u \in C([0,T]\times\bar \Omega)$ is a \textit{viscosity solution} to \eqref{problem} if it is both a sub- and a supersolution.
\end{definition}

\subsection{Main results}

With the above preliminaries set, we are ready to state our main results, which concern comparison of sub- and supersolutions and existence and uniqueness of viscosity solutions to \eqref{problem}.

\begin{theorem} \label{thm:comparison}
Assume that \eqref{ass:D1}, \eqref{ass:F1} - \eqref{ass:F3}, \eqref{ass:BC1} and \eqref{ass:O1} - \eqref{ass:O2} hold. If $u$ and $v$ are viscosity sub- and supersolutions to \eqref{problem}, respectively, then $u(t,x) \leq v(t,x)$ for $(t,x) \in [0,T) \times \bar \Omega$.
\end{theorem}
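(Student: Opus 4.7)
I would argue by contradiction, following the standard doubling-of-variables machinery of \cite{CIL92}, supplemented by two ingredients specific to the present setting: a selection of the critical index via the no-loop condition \eqref{ass:O2}, and a treatment of the Neumann condition via the exterior ball condition \eqref{ass:D1}. By Remark \ref{remark:F1}, I assume \eqref{eq:F1star} with some $\gamma>0$, and replace $u_i$ by $u_i-\eta/(T-t)$ to obtain a strict subsolution and rule out the maximum being approached as $t\to T$; at the end one lets $\eta\to 0$. Suppose for contradiction that
$$M:=\max_{1\leq i\leq m}\sup_{[0,T)\times \bar\Omega}(u_i-v_i)>0;$$
the compatibility \eqref{ass:compatibility} together with (iii) of Definition \ref{def:solution} exclude $t=0$ as a maximiser.

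\textit{Step 1 (Index selection via \eqref{ass:O2}).} Let $(\hat t,\hat x)$ denote a limit maximiser (produced from the doubling in Step 2) and let $I^\ast\subseteq\{1,\dots,m\}$ be the set of indices $i$ with $u_i(\hat t,\hat x)-v_i(\hat t,\hat x)=M$. Fix $i\in I^\ast$. If the obstacle branch of the subsolution inequality is active, i.e. $u_i(\hat t,\hat x)\leq u_j(\hat t,\hat x)-c_{ij}(\hat t,\hat x)$ for some $j\neq i$, then combining with the supersolution inequality $v_i\geq v_j-c_{ij}$ yields $u_j-v_j\geq u_i-v_i=M$ at $(\hat t,\hat x)$, so $j\in I^\ast$ and $u_j(\hat t,\hat x)-u_i(\hat t,\hat x)\geq c_{ij}(\hat t,\hat x)$. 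Iterating within the finite set $\{1,\dots,m\}$ produces a cycle $i_1,i_2,\dots,i_k,i_1$; summing the telescoping inequalities gives
$$0\geq c_{i_1i_2}(\hat t,\hat x)+c_{i_2i_3}(\hat t,\hat x)+\cdots+c_{i_ki_1}(\hat t,\hat x),$$
contradicting \eqref{ass:O2}. Hence one can select $i\in I^\ast$ at which the PDE branch of the subsolution inequality is active, i.e. $u_i(\hat t,\hat x)>\M_i u(\hat t,\hat x)$.

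\textit{Step 2 (Doubling, boundary treatment, and conclusion).} For the selected index $i$, consider $\Psi(t,s,x,y):=u_i(t,x)-v_i(s,y)-|x-y|^2/(2\epsilon)-(t-s)^2/(2\alpha)$ and pick a maximiser $(\bar t,\bar s,\bar x,\bar y)$; standard estimates yield $|\bar x-\bar y|^2/\epsilon\to 0$, $(\bar t-\bar s)^2/\alpha\to 0$ and $(\bar t,\bar x),(\bar s,\bar y)\to(\hat t,\hat x)$. Upper semicontinuity of the $u_j$'s together with continuity of the $c_{ij}$'s makes $\M_i u$ USC, so the strict inequality $u_i>\M_i u$ selected in Step 1 persists at the doubled point for $\epsilon,\alpha$ small. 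The principal difficulty is excluding $\bar x,\bar y\in\partial\Omega$: using \eqref{ass:D1} one compares $\bar x$ with the center of the touching exterior ball to get $\langle n(\bar x),p_\epsilon\rangle\geq -C|\bar x-\bar y|^2/\epsilon$ with $p_\epsilon=(\bar x-\bar y)/\epsilon$, and symmetrically at $\bar y$; combined with the monotonicity of $f_i$ in $r$ from \eqref{ass:BC1}, the boundary sub-/supersolution inequalities become inconsistent for small $\epsilon$ and force the maximiser into the interior (equivalently, one may use the Lions-Sznitman trick of replacing the quadratic penalty by $|x-y-\beta(n(x)+n(y))|^2/(2\epsilon)$ to bend the gradient). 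Once interior, the parabolic Ishii lemma (Theorem 8.3 of \cite{CIL92}) furnishes $(a_1,p_\epsilon,X)\in\bar{\mathcal P}^{2,+}_\Omega u_i(\bar t,\bar x)$ and $(a_2,p_\epsilon,Y)\in\bar{\mathcal P}^{2,-}_\Omega v_i(\bar s,\bar y)$ satisfying the matrix inequality of \eqref{ass:F3}. Subtracting the two PDE inequalities and invoking \eqref{eq:F1star}, \eqref{ass:F2}, \eqref{ass:F3}, then sending $\epsilon,\alpha\to 0$, yields $\gamma M\leq 0$, contradicting $M>0$. The chief obstacle is the boundary step, where \eqref{ass:D1} plays the essential role.
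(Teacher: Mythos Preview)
Your index selection via the no-loop condition \eqref{ass:O2} is correct and coincides with the paper's Step~3, and the final contradiction through Ishii's lemma and \eqref{eq:F1star}--\eqref{ass:F3} is also the paper's route. The gap is in your boundary treatment.

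The exterior-ball estimate $\langle n(\bar x),p_\epsilon\rangle\geq -C|\bar x-\bar y|^2/\epsilon$ (and its counterpart at $\bar y$) only tells you that the normal component of the penalty gradient tends to $0$. If the boundary branch is active for both the sub- and the supersolution at the doubled points, passing to the limit yields merely
\[
f_i(\hat t,\hat x,u_i(\hat t,\hat x))\leq 0\quad\text{and}\quad f_i(\hat t,\hat x,v_i(\hat t,\hat x))\geq 0.
\]
Since $f_i$ is only \emph{non-decreasing} in $r$ by \eqref{ass:BC1}, these two inequalities are perfectly compatible with $u_i(\hat t,\hat x)>v_i(\hat t,\hat x)$ (both $f_i$-values can be zero). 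There is no inconsistency, and hence nothing forces the PDE branch to be active. The Lions--Sznitman bending you allude to does not, on its own, cure this: it shifts the sign of $\langle n,p_\epsilon\rangle$, but the contribution of $f_i$ still has to be neutralised, and the resulting inequalities still need to be \emph{strict} to yield a contradiction for small~$\epsilon$.

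The paper supplies two extra devices that your sketch is missing. First (its Step~1), it perturbs to $u^\eta=u-\eta\varphi-C_\eta$ and $v^\eta=v+\eta\varphi+C_\eta$ with $\varphi$ from Lemma~\ref{lemma:varphi}; this keeps the interior sub/super properties but tightens the boundary conditions to the strict forms $B_i+\eta\leq 0$ and $B_i-\eta\geq 0$. Second (its Step~2), it adds the linear correction $-f_i(\hat t,\hat x,u_i(\hat t,\hat x))\langle n(\hat x),x-y\rangle$ to the doubling penalty; combined with the exterior-ball inequality this makes $B_i$ at the doubled points converge to a limit $\geq 0$ for $u$ and $\leq 0$ for $v$. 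Only then does the $\eta$-room force a violation of the (strict) boundary inequalities for all small $\epsilon$, so that the PDE branch must hold and the final contradiction can proceed.
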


\begin{theorem}\label{thm:existence}
Assume that \eqref{ass:D1}, \eqref{ass:F1}- \eqref{ass:F3}, \eqref{ass:BC1} and \eqref{ass:O1} - \eqref{ass:O3} and \eqref{ass:compatibility} hold. Then, there exists a unique viscosity solution to \eqref{problem}.
\end{theorem}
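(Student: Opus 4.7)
Uniqueness is an immediate consequence of Theorem~\ref{thm:comparison}: two viscosity solutions $u,v$ would satisfy $u\leq v$ and $v\leq u$, so $u=v$. My plan for existence is Perron's method, adapted to the present system of interconnected obstacles with Neumann boundary data.

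Given a bracketing pair of barriers --- a subsolution $\underline u$ and a supersolution $\overline u$ of \eqref{problem} with $\underline u\leq\overline u$ on $[0,T]\times\bar\Omega$ and $\underline u_i(0,\cdot)\leq g_i\leq\overline u_i(0,\cdot)$ --- I set
\[
W_i(t,x) := \sup\bigl\{w_i(t,x) : w \text{ is a subsolution of \eqref{problem} with } \underline u \leq w \leq \overline u \bigr\},
\]
and apply Ishii's Perron lemma componentwise. This yields that the upper semicontinuous envelope $W^*$ is a subsolution and the lower semicontinuous envelope $W_*$ is a supersolution of \eqref{problem}; the extension from scalar equations to systems is essentially mechanical, since $u\mapsto\M_i u$ is continuous in $u$ and compatible with semicontinuous enveloping in the relevant direction. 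Theorem~\ref{thm:comparison} then forces $W^*=W_*$, producing a continuous viscosity solution that matches the initial data provided the barriers themselves do.

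The central task is hence constructing the barriers. Using \eqref{ass:D1*} --- which, via \cite[Lemma 2.2]{AKSZ07}, amounts to $\partial\Omega\in C^{1,1}$ with both interior and exterior ball properties --- one obtains $\eta\in C^2(\bar\Omega)$ with $\eta\leq 0$, $\eta|_{\partial\Omega}=0$, and $\langle n(x), D\eta(x)\rangle\geq 1$ on $\partial\Omega$, e.g.\ through a smooth extension of the signed distance to $\partial\Omega$. Using \eqref{ass:O2}, and in particular the triangle-type inequality \eqref{ass:O3}, one chooses real constants $\alpha_1,\ldots,\alpha_m$ with $\alpha_i-\alpha_j\geq -\inf_{(t,x)}c_{ij}(t,x)$ for all $i\neq j$ --- a classical consequence of the no-loop structure. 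Setting
\[
\overline u_i(t,x):=\alpha_i+L\eta(x)+Mt+C,\qquad \underline u_i(t,x):=-\alpha_i-L\eta(x)-Mt-C,
\]
one verifies in turn: the obstacle inequality is automatic since $\overline u_i-\overline u_j=\alpha_i-\alpha_j\geq -c_{ij}$ (and analogously for $\underline u$); the Neumann condition holds by choosing $L$ large, since on $\partial\Omega$ one has $\langle n, D\overline u_i\rangle+f_i= L\langle n,D\eta\rangle+f_i\geq L-\|f_i\|_\infty\geq 0$; the initial inequalities hold for $C$ large (with a bound depending on $L$ and $\sup\eta$); and the PDE inequality is obtained via the strict monotonicity \eqref{eq:F1star}, choosing $C$ and then $M$ large enough to dominate the bounded contributions of $LD\eta$ and $LD^2\eta$ in $F_i$. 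Exact matching of the initial data can then be enforced by an additional smooth correction vanishing at $t=0$, a standard step requiring only continuity of $g$.

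The principal difficulty is the simultaneous satisfaction of the four constraints --- obstacle, PDE, Neumann, initial --- by a single explicit barrier. The strengthened hypotheses \eqref{ass:D1*} and \eqref{ass:O3} are precisely what make this reconciliation possible: \eqref{ass:D1*} furnishes the smooth boundary-adapted $\eta$ that can overpower the Neumann term without disturbing the obstacle, while \eqref{ass:O3} ensures that the obstacle inequalities can be met simultaneously by constant component-wise shifts $\alpha_i$, decoupling the obstacle structure from the remaining requirements. With the barriers in hand, Perron's method together with Theorem~\ref{thm:comparison} yields the unique continuous viscosity solution of \eqref{problem}.
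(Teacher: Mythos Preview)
Your overall strategy---comparison for uniqueness, Perron's method plus explicit barriers for existence---is the paper's as well. But your barrier construction contains two genuine gaps.

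\medskip

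\textbf{The constants $\alpha_i$ need not exist.} You require $\alpha_j-\alpha_i\leq \inf_{(t,x)}c_{ij}(t,x)=:m_{ij}$ for all $i\neq j$; such a system of difference constraints is feasible iff the digraph with edge weights $m_{ij}$ has no negative cycle. Assumption \eqref{ass:O2} only says $c_{i_1i_2}(t,x)+\cdots+c_{i_ki_1}(t,x)>0$ \emph{pointwise}, which does not force $m_{i_1i_2}+\cdots+m_{i_ki_1}\geq 0$ when the separate infima sit at different points, and \eqref{ass:O3} does not rescue this. Concretely, take $m=2$ and $c_{12}(x)=2+\sin x$, $c_{21}(x)=\epsilon-1-\sin x$ on a domain containing $[0,2\pi]$: then $c_{12}+c_{21}\equiv 1+\epsilon>0$ so \eqref{ass:O2} holds, \eqref{ass:O3} is vacuous for $m=2$, and \eqref{ass:compatibility} can be arranged; yet $m_{12}+m_{21}=1+(\epsilon-2)<0$, so no such $\alpha_1,\alpha_2$ exist. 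The paper avoids this by making the component-wise shift a \emph{function}: its supersolution is
\[
V_j^{i,\hat x,\epsilon}(t,x)=\bigl(\text{$j$-independent part}\bigr)+c_{ij}(t,x),
\]
so that $V_j-\M_jV=c_{ij}+c_{j\hat k}-c_{i\hat k}\geq 0$ follows \emph{pointwise} from \eqref{ass:O3}. This is precisely where \eqref{ass:O3} enters, and it cannot be replaced by a constant shift.

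\medskip

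\textbf{Initial data are not matched.} A single bracketing pair $\underline u(0,\cdot)\leq g\leq\overline u(0,\cdot)$ does not force the Perron function $W$ to take the value $g$ at $t=0$; a ``smooth correction vanishing at $t=0$'' changes nothing at $t=0$. What is needed is a \emph{family} of barriers, one for each $\hat x\in\bar\Omega$ and each $\epsilon>0$, with $V^{i,\hat x,\epsilon}_i(0,\hat x)\leq g_i(\hat x)+\epsilon$ and the analogous lower bound---this is the hypothesis of the paper's Perron proposition. The paper achieves it via the local penalty $B\exp(\kappa\varphi(x))|x-\hat x|^2$: the quadratic factor pins the barrier to $g_i(\hat x)$ at $\hat x$ (sending $B\to\infty$ lets $\epsilon\to 0$), while the factor $\exp(\kappa\varphi(x))$ together with the \emph{interior} ball condition from \eqref{ass:D1*} keeps the Neumann inequality intact despite the new gradient term $2B(x-\hat x)$, since $\langle n(x),x-\hat x\rangle\geq -\tfrac1r|x-\hat x|^2$ on $\partial\Omega$. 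Your global ansatz $L\eta(x)+Mt+C$ provides neither the localization nor this boundary compensation.
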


Our proofs rely on by now classical techniques in the theory of viscosity solutions, using doubling of variables and the maximum principle for semi-continuous functions for Theorem \ref{thm:comparison} and Perron's method together with a construction of certain sub- and supersolutions for Theorem \ref{thm:existence}. From the proof of Theorem \ref{thm:comparison} we also get the following corollary, useful in its own merit.
\begin{corollary}  \label{corr:mixedcomparison}
Let $u$ and $v$ be viscosity sub- and supersolutions, respectively, to \eqref{problem}. If $u$ and $v$ satisfy
Definition \ref{def:solution} $(ii)$ on some open region $G \subset (0,T) \times \partial \Omega$ and $u \leq v$ on $\left((0,T) \times \partial \Omega \right)\setminus G$, then $u(t,x) \leq v(t,x)$ for all $(t,x) \in [0,T) \times \bar \Omega$.
\end{corollary}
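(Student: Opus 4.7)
The plan is to follow the doubling-of-variables argument from the proof of Theorem \ref{thm:comparison} essentially verbatim, modifying only the handling of boundary points so as to exploit the hypothesis $u\leq v$ on $((0,T)\times\partial\Omega)\setminus G$. I would assume for contradiction that $u_i(t_0,x_0)>v_i(t_0,x_0)$ for some index $i$ and some $(t_0,x_0)\in[0,T)\times\bar\Omega$, and form the standard penalized doubled functional
\[
\Phi_\e(t,x,y)=u_i(t,x)-v_i(t,y)-\tfrac{1}{\e}|x-y|^2-(\text{auxiliary terms}),
\]
with the same auxiliary terms (time-regularizer, boundary penalty, linear scaling in $\lambda t$, etc.) as in the proof of Theorem \ref{thm:comparison}. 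Let $(\hat t_\e,\hat x_\e,\hat y_\e)$ be a maximizer on $[0,T]\times\bar\Omega\times\bar\Omega$. Standard estimates yield $|\hat x_\e-\hat y_\e|^2/\e\to 0$ and, after passing to a subsequence, $(\hat t_\e,\hat x_\e,\hat y_\e)\to(t^*,x^*,x^*)$ with $u_i(t^*,x^*)-v_i(t^*,x^*)>0$.

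Now I would branch on where the limit point $(t^*,x^*)$ lands. The case $t^*=0$ is handled by the initial-data argument from Theorem \ref{thm:comparison} unchanged. If $x^*\in\Omega$, then for $\e$ small we have $\hat x_\e,\hat y_\e\in\Omega$ and the interior contradiction of Theorem \ref{thm:comparison}--Crandall--Ishii--Lions together with \eqref{ass:F1}--\eqref{ass:F3} and, if the obstacle is active at the limit, the no-loop condition \eqref{ass:O2}--applies verbatim. If $(t^*,x^*)\in G$, openness of $G$ in $(0,T)\times\partial\Omega$ ensures that whenever $(\hat t_\e,\hat x_\e)$ or $(\hat t_\e,\hat y_\e)$ meets $(0,T)\times\partial\Omega$ for small $\e$, it does so inside $G$; thus the Neumann alternative in Definition \ref{def:solution}(ii) is available to both $u$ and $v$ at those points, and the boundary contradiction from the proof of Theorem \ref{thm:comparison} applies as written.

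The only genuinely new case is $(t^*,x^*)\in((0,T)\times\partial\Omega)\setminus G$. Since $G$ is open, its complement is closed in $(0,T)\times\partial\Omega$, so combining upper semi-continuity of $u_i$, lower semi-continuity of $v_i$, and the standing hypothesis $u\leq v$ on that complement yields
\[
u_i(t^*,x^*)-v_i(t^*,x^*)\leq 0,
\]
contradicting the strict positivity produced by the penalization. Putting the three cases together closes the argument and forces $u\leq v$ on $[0,T)\times\bar\Omega$. I expect the only delicate point to be the bookkeeping in case $(t^*,x^*)\in G$: one must verify that the boundary contradiction in Theorem \ref{thm:comparison} never invokes the Neumann alternative at a point lying outside $G$, which is automatic because the two maximizing points $\hat x_\e,\hat y_\e$ both converge to the same limit $x^*$ and $G$ is a neighborhood of $x^*$ in the relative topology of $(0,T)\times\partial\Omega$.
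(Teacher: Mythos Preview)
Your proposal is correct and follows essentially the same approach as the paper: you locate the putative positive maximum of $u_i-v_i$, observe that the hypothesis $u\le v$ on $((0,T)\times\partial\Omega)\setminus G$ forces the limit point $(t^*,x^*)$ either into the interior or into $G$, and then invoke the corresponding piece of the proof of Theorem~\ref{thm:comparison}. The paper organizes this slightly differently---it first appeals to Proposition~\ref{prop:comparisonnoboundary} to place $(\hat t,\hat x)$ on $\partial\Omega$ and then argues it must lie in $G$---but the substance is identical.
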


\begin{remark}
We have chosen to present our results and proofs in a rather simplistic setting, with directional derivative in the normal direction and a smooth, stationary domain $\Omega$. However, our proofs reveal that the additional arguments needed to treat systems of PDEs with interconnected obstacles (rather than a single PDE) are more or less decoupled from the standard arguments of Crandall-Ishii-Lions \cite{CIL92}.
Hence, by combining these additional arguments with existence and uniqueness proofs for more general PDEs and domains, one should be able to prove existence and uniqueness for systems of PDEs with interconnected obstacles in similar generality.
For tractability, we refrain from such generalizations here.
\end{remark}


\setcounter{equation}{0} \setcounter{theorem}{0}

\section{Proof of Theorem \ref{thm:comparison}: The comparison principle.}

Throughout the section, we assume that the assumptions of Theorem \ref{thm:comparison} hold, i.e., that \eqref{ass:D1}, \eqref{ass:F1}- \eqref{ass:F3}, \eqref{ass:BC1} and \eqref{ass:O1} - \eqref{ass:O3} hold.
Let $u=(u_1, u_2, \dots, u_m)$ and $v=(v_1, v_2, \dots, v_m)$ be sub- and supersolutions to \eqref{problem}, respectively.
Our proofs follow the classical outline of Crandall-Ishii-Lions \cite{CIL92} and consists of four major steps.

\subsubsection*{Step 1. An $\epsilon$-room on the boundary.}
\begin{lemma}[Lemma 7.6 of \cite{CIL92}] \label{lemma:varphi}
For any continuous $\nu(x): \partial \Omega \to \R^n$ satisfying $\langle n(x), \nu(x)	\rangle > 0$ there exists a $C^2(\bar \Omega)$ function $\varphi(x)$ s.t.
$$
\langle \nu(x), D\varphi(x) \rangle \geq 1 \mbox{ for } x \in \partial \Omega \quad 	\mbox{and} \quad \varphi(x) \geq 0 \mbox{ on } \bar \Omega.
$$
\end{lemma}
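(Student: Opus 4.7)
The natural candidate for $\varphi$ is a smoothed version of the negative signed distance to $\partial\Omega$: its gradient on $\partial\Omega$ equals the exterior unit normal $n(x)$, so the hypothesis $\langle\nu,n\rangle>0$ would immediately deliver a strict lower bound for $\langle\nu,D\varphi\rangle$. The obstacle with this as a direct construction is that under $\partial\Omega\in C^1$ alone, the signed distance function is generally not smooth enough up to the boundary to serve as $\varphi$ itself, so one must produce a $C^2$ proxy by hand.

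My plan is to glue affine templates via a partition of unity. First, by compactness of $\partial\Omega$ and continuity of $\nu$ and $n$, fix $\delta>0$ with $\langle\nu(x),n(x)\rangle\geq\delta$ for all $x\in\partial\Omega$. Second, at each $\hat x\in\partial\Omega$ introduce the affine template $\psi_{\hat x}(y):=\langle n(\hat x),y\rangle$, whose gradient is the constant vector $n(\hat x)$; by continuity of $\nu$ and $n$, pick a neighborhood $U_{\hat x}$ of $\hat x$ on which $\langle\nu(y),n(\hat x)\rangle\geq\delta/2$ for $y\in U_{\hat x}\cap\partial\Omega$, and moreover on which $n$ oscillates by no more than a prescribed $\eta>0$. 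Third, extract a finite subcover $U_1,\dots,U_N$ of $\partial\Omega$, augment it with an open set $U_0$ compactly contained in $\Omega$ to cover $\bar\Omega$, and pick a smooth partition of unity $\{\rho_i\}_{i=0}^N$ subordinate to $\{U_i\}$ with the property that $\sum_{i\geq 1}\rho_i\equiv 1$ on a neighborhood of $\partial\Omega$ (so that $\rho_0$ vanishes there). Finally, set
$$
\tilde\varphi(x):=\sum_{i=1}^N\rho_i(x)\,\psi_{\hat x_i}(x),\qquad \varphi(x):=C\,\tilde\varphi(x)+K,
$$
for positive constants $C,K$ to be tuned. As a finite sum of products of smooth functions, $\varphi\in C^\infty(\bar\Omega)$.

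Differentiating and evaluating on $\partial\Omega$ yields $\langle\nu(x),D\varphi(x)\rangle=C(M_1(x)+M_2(x))$, where $M_1(x):=\sum_i\rho_i(x)\langle\nu(x),n(\hat x_i)\rangle\geq\delta/2$ is a convex combination of quantities $\geq\delta/2$, and $M_2(x):=\sum_i\psi_{\hat x_i}(x)\langle\nu(x),D\rho_i(x)\rangle$ is an error term whose sign is a priori unclear. Controlling $M_2$ is the main obstacle I foresee: shrinking the $U_i$ makes each $\psi_{\hat x_i}$ vary less but inflates $|D\rho_i|$. The way out is the identity $\sum_{i\geq 1}D\rho_i\equiv 0$ on a neighborhood of $\partial\Omega$, which lets me replace each $\psi_{\hat x_i}$ in $M_2$ by $\psi_{\hat x_i}-\psi_{\hat x_{j(x)}}$ for any index $j(x)$ with $\rho_{j(x)}(x)>0$; since only overlapping patches contribute, the normals in these differences differ by at most $2\eta$, so the error is bounded by a constant multiple of $\eta\cdot\mathrm{diam}(\bar\Omega)$ (the constant coming from the locally finite overlap of the cover and the standard $|D\rho_i|\leq C/r$ scaling, balanced by refining the cover to make $\eta$ small). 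Hence $|M_2|\leq\delta/4$ once the cover is fine enough, so $\langle\nu,D\varphi\rangle\geq C\delta/4$; choosing $C\geq 4/\delta$ gives $\geq 1$ on $\partial\Omega$, and finally $K$ is taken large enough so that $\varphi\geq 0$ on the bounded set $\bar\Omega$.
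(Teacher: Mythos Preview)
The paper does not give its own proof of this lemma; it is quoted as Lemma~7.6 of \cite{CIL92} and used as a black box. So there is nothing in the paper to compare against, and your construction has to stand on its own.

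The partition-of-unity scheme is sound in outline, but your control of $M_2$ contains a real gap. After the subtraction trick, with your uncentered templates $\psi_{\hat x}(y)=\langle n(\hat x),y\rangle$, the surviving differences are $\langle n(\hat x_i)-n(\hat x_{j(x)}),x\rangle$, of size at most $2\eta\,\mathrm{diam}(\bar\Omega)$. But this must still be multiplied by $|D\rho_i|\lesssim 1/r$, so the honest bound is $|M_2|\lesssim \eta\,\mathrm{diam}(\bar\Omega)/r$, not $\eta\,\mathrm{diam}(\bar\Omega)$. Your parenthetical ``balanced by refining the cover to make $\eta$ small'' tacitly assumes $\eta/r\to 0$ as the cover is refined; under the mere $C^1$ hypothesis \eqref{ass:D1} in force here, $\eta$ is only the modulus of continuity $\omega_n(r)$ of the unit normal, and $\omega_n(r)/r$ need not tend to zero. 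As written, the argument requires $\partial\Omega\in C^{1,\alpha}$ or better.

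The repair is minor: take \emph{centered} templates $\psi_{\hat x}(y)=\langle n(\hat x),y-\hat x\rangle$. For $x\in U_i\cap U_{j(x)}$ with patch radius $\sim r$ one then has
\[
\psi_{\hat x_i}(x)-\psi_{\hat x_{j(x)}}(x)=\langle n(\hat x_i)-n(\hat x_{j(x)}),\,x-\hat x_i\rangle+\langle n(\hat x_{j(x)}),\,\hat x_{j(x)}-\hat x_i\rangle,
\]
and both pieces are $O(\omega_n(r)\,r)$: the first by the oscillation bound on $n$ together with $|x-\hat x_i|\lesssim r$, the second by the $C^1$ tangency of $\partial\Omega$ (uniformly on the compact boundary). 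Now the factor $r$ cancels the $1/r$ from $|D\rho_i|$, giving $|M_2|\lesssim\omega_n(r)\to 0$, and the rest of your proof goes through unchanged. Alternatively, the route actually taken in \cite{CIL92} is to mollify a $C^1$ defining function for $\Omega$, which sidesteps the partition-of-unity bookkeeping altogether.
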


Let
$$
u_i^\eta(t,x) := u_i(t,x) - \eta \varphi(x) - C_\eta \quad \mbox{and} \quad v_i^\eta:= v_i(t,x) + \eta \varphi(x) + C_\eta
$$
where $\varphi(x)$ is as given by the lemma above (with $\nu(x) = n(x)$) and $C_\eta>0$ is a constant to be specified later.
Note that $u_i^\eta < u_i$ so that for any $(a,p,X) \in \supjetbar u_i^\eta (t,x)$ we have from \eqref{ass:F1} and Remark \ref{remark:F1} (i.e., from \eqref{eq:F1star}) that
\begin{align*}
&\gamma (\eta \varphi(x) + C_\eta) \leq F_i(t,x,u_i(t,x), p, X) - F_i(t,x,u_i^\eta(t,x),p, X)
\end{align*}
for some $\gamma >0$ and thus
\begin{align*}
a+ F_i(t,x,u^\eta(t,x),p, X) \leq&a+   F_i(t,x,u_i(t,x), p, X)  - \gamma \eta \varphi(x) - \gamma C_\eta \\
\leq & a+ F_i(t,x, u_i(t,x), p + \eta D\varphi(x), X + \eta D^2\varphi(x))-  \gamma \eta \varphi(x) - \gamma C_\eta  + \omega(\eta M)
\end{align*}
where $M= \sup_{\bar \Omega}\{D\varphi(x) +||D^2\varphi(x)||\}$ and where the last inequality comes from \eqref{ass:F2}.
Since $(a,p,X) \in \supjetbar u^\eta (t,x)$ and $u^\eta = u - \eta \varphi(x)- \C_\eta$ we have
$$
(a,p+\eta D\varphi(x), X + \eta D^2\varphi(x)) \in \supjetbar u(t,x)
$$
and therefore choosing $C_\eta = \frac{\omega(\eta M)}{\gamma}$ we get that
$$
a+ F_i(t,x,u_i^\eta (t,x),p, X) \leq -\gamma \eta \varphi(x) < 0
$$
whenever $(t,x) \in (0,T) \times \Omega$ and
$
u_i(t,x) > \M_i u(t,x)
$
(since $u$ is a subsolution).
However, $- \eta \varphi(x) - C_\eta$ is independent of $i$ and thus
\begin{align*}
u_i(t,x) > \M_i u(t,x) &\iff u^\eta_i(t,x) > \M_i u^\eta(t,x)
\end{align*}
and we can conclude that
$$
\max \{a+ F_i(t,x,u_i^\eta(t,x),p, X), u^\eta_i - \M_i u^\eta(t,x) \}  \leq 0
$$
whenever $(a,p,X)\in \supjetbar u^\eta_i(t,x)$, $(t,x) \in (0,T) \times \Omega$.
On the other hand, if $x \in \partial \Omega$, we have by \eqref{ass:BC1}
\begin{align*}
B_i(t, x, u^\eta_i(t,x), p) =& B_i(t,x,u_i^\eta(t,x), p + \eta D \varphi(x)) - \eta \langle n(x) , D\varphi(x) \rangle  \\
\leq & B_i(t,x, u(t,x), p + \eta D\varphi(x)) - \eta
\end{align*}
and thus $u^\eta$ is a subsolution to \eqref{problem} with the boundary condition
$$
B_i(t,x,r,p)  :=\langle n(x),p\rangle + f_i(t,x,r)  \leq 0 \quad \mbox{replaced by} \quad \check B_i(t,x,r,p) :=B_i(t,x,r,p)  + \eta \leq 0.
$$
A similar calculation shows that $v^\eta$ is a supersolution to \eqref{problem} with boundary condition $\hat B_i(t,x,r,u): = B_i(t,x,r,p) - \eta \geq 0$.
Consequently, it suffices to prove the comparison $u^\eta \leq v^\eta$ when $u^\eta$ and $v^\eta$ are sub- and supersolutions to \eqref{problem} with boundary conditions $\check B_i(t,x,r,p)$ and $\hat B_i(t,x,r,p)$, respectively, since we retrieve our result in the limit as $\eta \to 0$.

\subsubsection*{Step 2. Avoiding the Neumann boundary condition.}

We now construct a test function which allows us to discard the Neumann boundary condition in \eqref{problem}. We will need the following comparison principle without boundary condition whose proof, which is similar to the current, is postponed to the end of the section.

\begin{proposition} \label{prop:comparisonnoboundary}
Let $u$ and $v$ be viscosity sub- and supersolutions, respectively, to
\begin{align*}
&\max\{{\partial_t u_i} + F_i(t,x,u_i,Du_i,D^2u_i) , u_i(t,x)-\M_iu(t,x) \}=0 \\
&u_i(0,x) = g_i(x)
\end{align*}
on $[0,T) \times \bar \Omega$ in the sense of Definition \ref{def:solution} $(i)$ and $(iii)$.
Then,
$$
 \sup_{[0,T) \times \bar \Omega} (u_i -v_i) \leq \max_{k \in\{1,\dots, m\}} \sup_{((0,T) \times \partial \Omega) \cup (\{0\} \times \bar \Omega)}(u_k-v_k)^+.
$$
\end{proposition}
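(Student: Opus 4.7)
The plan is to argue by contradiction in the spirit of Crandall-Ishii-Lions, with one additional combinatorial step driven by the no-loop condition \eqref{ass:O2}. Set
$$M_0 := \max_{k}\sup_{((0,T)\times\partial\Omega)\cup(\{0\}\times\bar\Omega)}(u_k-v_k)^+$$
and suppose, towards a contradiction, that $\max_i\sup_{[0,T)\times\bar\Omega}(u_i-v_i)>M_0$. Introduce a temporal penalty $\Psi_{i,\sigma}(t,x):=u_i(t,x)-v_i(t,x)-\sigma/(T-t)$ and set $M_\sigma:=\max_i\sup_{[0,T)\times\bar\Omega}\Psi_{i,\sigma}$. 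For $\sigma>0$ sufficiently small, $M_\sigma>M_0$, and USC of $u_i-v_i$ together with the blow-up of $\sigma/(T-t)$ as $t\uparrow T$ and compactness of $\bar\Omega$ guarantees that $M_\sigma$ is attained at some $(\hat t,\hat x)$ with $\hat t<T$. Since $\Psi_{i,\sigma}\leq M_0$ on $((0,T)\times\partial\Omega)\cup(\{0\}\times\bar\Omega)$, the point $(\hat t,\hat x)$ must lie in the parabolic interior $(0,T)\times\Omega$.

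The next step isolates a ``good'' index via \eqref{ass:O2}. Let $I:=\{i:\Psi_{i,\sigma}(\hat t,\hat x)=M_\sigma\}$. I claim there exists $i^{*}\in I$ with $u_{i^{*}}(\hat t,\hat x)>\M_{i^{*}}u(\hat t,\hat x)$. Otherwise, for every $i\in I$ some $j(i)\neq i$ satisfies $u_i(\hat t,\hat x)\leq u_{j(i)}(\hat t,\hat x)-c_{ij(i)}(\hat t,\hat x)$, while the supersolution property gives $v_i(\hat t,\hat x)\geq v_{j(i)}(\hat t,\hat x)-c_{ij(i)}(\hat t,\hat x)$; subtracting yields $(u_{j(i)}-v_{j(i)})(\hat t,\hat x)\geq (u_i-v_i)(\hat t,\hat x)$, so $j(i)\in I$. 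Iterating within the finite set $I$ produces a cycle $i_1\to i_2\to\dots\to i_k\to i_1$, and summing $c_{i_l,i_{l+1}}\leq u_{i_{l+1}}(\hat t,\hat x)-u_{i_l}(\hat t,\hat x)$ around the loop gives $c_{i_1,i_2}+\dots+c_{i_k,i_1}\leq 0$, contradicting \eqref{ass:O2}.

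With this $i^{*}$ fixed, I perform the standard doubling of variables. For $\alpha>0$ define
$$\Phi_\alpha(t,x,y):=u_{i^{*}}(t,x)-v_{i^{*}}(t,y)-\frac{|x-y|^2}{2\alpha}-\frac{\sigma}{T-t},$$
and let $(\hat t_\alpha,\hat x_\alpha,\hat y_\alpha)$ be a maximizer on $[0,T)\times\bar\Omega\times\bar\Omega$. Standard estimates yield $|\hat x_\alpha-\hat y_\alpha|^2/\alpha\to 0$ and, along a subsequence, $(\hat t_\alpha,\hat x_\alpha),(\hat t_\alpha,\hat y_\alpha)\to(\hat t,\hat x)$ with $u_{i^{*}}(\hat t_\alpha,\hat x_\alpha)\to u_{i^{*}}(\hat t,\hat x)$ and $v_{i^{*}}(\hat t_\alpha,\hat y_\alpha)\to v_{i^{*}}(\hat t,\hat x)$, so for $\alpha$ small these points lie in $(0,T)\times\Omega$. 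Since $\M_{i^{*}}u$ is USC, the strict inequality $u_{i^{*}}>\M_{i^{*}}u$ at $(\hat t,\hat x)$ persists at $(\hat t_\alpha,\hat x_\alpha)$ for $\alpha$ small, so the subsolution condition there reduces to the PDE part. The maximum principle for semi-continuous functions (Theorem 8.3 of \cite{CIL92}) then supplies jets $(a_\alpha,p_\alpha,X_\alpha)\in\supjetbar u_{i^{*}}(\hat t_\alpha,\hat x_\alpha)$ and $(b_\alpha,p_\alpha,Y_\alpha)\in\subjetbar v_{i^{*}}(\hat t_\alpha,\hat y_\alpha)$ with $a_\alpha-b_\alpha=\sigma/(T-\hat t_\alpha)^2$, $p_\alpha=(\hat x_\alpha-\hat y_\alpha)/\alpha$, and a matrix inequality matching the hypothesis of \eqref{ass:F3}. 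Subtracting the PDE inequalities and using \eqref{eq:F1star} together with \eqref{ass:F3} yields
$$\frac{\sigma}{(T-\hat t_\alpha)^2}+\gamma\bigl(u_{i^{*}}(\hat t_\alpha,\hat x_\alpha)-v_{i^{*}}(\hat t_\alpha,\hat y_\alpha)\bigr)\leq\omega\!\left(\frac{|\hat x_\alpha-\hat y_\alpha|^2}{\alpha}+|\hat x_\alpha-\hat y_\alpha|(|p_\alpha|+1)\right).$$
Sending $\alpha\to 0$ the right-hand side vanishes while the left-hand side tends to $\sigma/(T-\hat t)^2+\gamma(M_\sigma+\sigma/(T-\hat t))>0$, the desired contradiction. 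The main obstacle is the combinatorial decoupling in the second paragraph together with the propagation of the strict inequality $u_{i^{*}}>\M_{i^{*}}u$ from the limit point to the doubled-variable maxima via USC of $\M_{i^{*}}u$; once this is in place the remainder is the classical CIL92 interior parabolic comparison argument.
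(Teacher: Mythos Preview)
Your argument follows the same route as the paper's: a temporal penalty $\sigma/(T-t)$ to push the maximum away from $t=T$, the Ishii--Koike no-loop step to isolate a component $i^*$ strictly above its obstacle, and then the CIL92 doubling-of-variables contradiction. The paper first reduces to the case $u\leq v$ on the lateral boundary by subtracting the constant $K=M_0$, whereas you work directly with $M_\sigma>M_0$; both variants are standard and equivalent.

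There is, however, one technical gap. You assert that the doubled maxima $(\hat t_\alpha,\hat x_\alpha,\hat y_\alpha)$ converge, along a subsequence, to the \emph{specific} point $(\hat t,\hat x,\hat x)$ at which you ran the no-loop argument and obtained $u_{i^*}(\hat t,\hat x)>\M_{i^*}u(\hat t,\hat x)$. With only the penalty $|x-y|^2/(2\alpha)$ this is not guaranteed: the standard lemma (Lemma~3.1 in \cite{CIL92}) only yields that any accumulation point $(\bar t,\bar x)$ is \emph{a} maximiser of $\Psi_{i^*,\sigma}$. At such a $(\bar t,\bar x)$ one still has $i^*\in I(\bar t,\bar x)$ (since $\Psi_{i^*,\sigma}(\bar t,\bar x)=M_\sigma$), but there is no reason the strict inequality $u_{i^*}>\M_{i^*}u$ holds there --- the good index furnished by \eqref{ass:O2} at $(\bar t,\bar x)$ may well be a different element of $I(\bar t,\bar x)$. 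Your ``persistence via USC of $\M_{i^*}u$'' step therefore does not go through as written, and the obstacle cannot be discarded at $(\hat t_\alpha,\hat x_\alpha)$.

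The paper handles this by inserting the localising penalty $|x-\hat x|^4+|y-\hat x|^4+|t-\hat t|^2$ into the test function, which forces the accumulation point to be $(\hat t,\hat x)$ itself. Adding these terms to your $\Phi_\alpha$ repairs the argument with only cosmetic changes to the derivatives entering the final estimate (the extra terms are $o(1)$ as $\e\to0$). With that fix in place, your proof is correct and essentially identical in substance to the paper's.
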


Let us now assume the opposite of what we seek to prove, i.e., that there exists a non-empty set $\I \subset \{1,\dots, m\}$ and $(\hat t, \hat x) \in [0,T] \times \bar \Omega$ such that
\begin{align}\label{eq:maxpointdelta}
\max_{k \in \{1,\dots, m\}}\sup_{(0,T)\times\Omega} (u_k -v_k) = u_i(\hat t, \hat x) - v_i(\hat t, \hat x) = \delta >0
\end{align}
for any $i\in \I$. Since $u_i \leq g_i \leq v_i$ on $\{0\} \times \bar \Omega$ by definition, we have $\hat t >0$ and by Proposition \ref{prop:comparisonnoboundary} we can then also assume $\hat x \in \partial \Omega$. Moreover, if we set
$$
u^\theta(t,x): = u(t,x) - \frac{\theta}{T-t}
$$
for $\theta >0$ arbitrary, we have $u^\theta < u$,
$$
(a, p, X ) \in \supjetbar u^\theta (t,x) \iff (a +\frac{\theta}{(T-t)^2}, p, X) \in \supjetbar	u(t,x),
$$
and
\begin{align*}
(i) &\, u_i(t,x) > \M_i u(t,x) \iff  u^\theta_i(t,x) > \M_i  u^\theta(t,x) \quad \mbox{(since $u^\theta_i - u_i$ is independent of $i$)}, \notag \\
(ii) &\, \langle n(x), p \rangle + f_i(t,x,u_i^\theta(x)) \leq \langle n(x), p \rangle + f_i(x,u_i(x)) \quad \mbox{(by \eqref{ass:BC1})} \notag \\
(iii) &\,  a + F_i(t,x,u_i^\theta(t,x),p, X) \leq a+ F_i(t,x,u_i(t,x),p, X) \quad \mbox{(by \eqref{ass:F1})}.
\end{align*}
It follows immediately that $u^\theta$ is a subsolution to \eqref{problem}  with $F_i$ replaced by
\begin{equation*}
F_i^\theta (t,x,r,p,X)= F_i(t,x,r,p,X)+ \frac{\theta}{T-t}
\end{equation*}
 and that $u^\theta \to -\infty$ as $t \to T$. We may therefore also assume that $\hat t < T$, 
since if not, we can prove $u^\theta \leq v$ as follows and then retrieve our result in the limit as $\theta \to 0$.

For $\epsilon >0$ (tending to $0$ in the final argument) and $i \in \mathcal I$ arbitrary but fixed, let
\begin{equation*}
\varphi_\epsilon (t,x,y) = \frac{1}{2\epsilon} |x-y|^2 + |x-\hat x|^4 + |y-\hat x|^4 + |t-\hat t|^2 - f_i(\hat t, \hat x, u_i(\hat t, \hat x)) \langle n(\hat x), x-y \rangle
\end{equation*}
and consider the function
\begin{equation*}
\Phi_\epsilon(t,x,y)= u_i(t,x)-v_i(t,y)-  \varphi_\epsilon(t,x,y)
\end{equation*}
which is USC by construction. Let $(t_\e, x_\e, y_\e)$ be the maximum point of $\Phi_\e$ on $[0,T) \times \bar \Omega \times \bar \Omega$. (This maximum point exists for $\e$ small as $\Phi_\e$ is USC, $\hat t <T$ and $\bar \Omega$ is compact.) Clearly, $|x_\e - y_\e| \to 0$ as $\e \to 0$ as $(t_\e, x_\e, y_\e)$ is a maximum point. Since
$$
2\Phi_\e (t_\e, x_\e, y_\e ) \geq \Phi_\e (t_\e, x_\e, x_\e) + \Phi_\e (t_\e, y_\e, y_\e)
$$
we have
\begin{align*}
\frac{1}{\e}|x_\e -y_\e|^2 - 2 f_i(\hat t, \hat x, u(\hat t, \hat x)) \langle n(\hat x), x_\e-y_\e  \rangle \leq u_i(t_\e, x_\e)-  u_i(t_\e, y_\e)  +  v_i(t_\e, x_\e) - v_i(t_\e, y_\e) < \infty
\end{align*}
where the last inequality holds since $u_i-v_i$ is USC. This gives that also
$\frac{1}{\e}|x_\e -y_\e|^2 \to 0$ as $\e\to 0$
and $(t_\e,x_\e) \to (\hat t, \hat x)$ since $u_i(t,x) - v_i(t,x) - \varphi_\e(t,x,x)$ has a maximum at $(\hat t, \hat x)$. Moreover, from the upper- and lower semi-continuity of $u$ and $v$ we get
\begin{equation*}
u_i(t_\e, x_\e) \to u_i(\hat t, \hat x) \quad \mbox{and} \quad v_i(t_\e, x_\e) \to v_i(\hat t, \hat x).
\end{equation*}
In particular, from the definition of $\varphi_\e$ and $(t_\e, x_\e, y_\e)$ we get
\begin{align}\label{eq:usclimit}
u_i (\hat t, \hat x) - v_i (\hat t, \hat x) =& \Phi_\e(\hat t, \hat x,\hat x) \leq \Phi_\e(t_\e, x_\e, y_\e) \notag \\
\leq& u_i (t_\e, x_\e) - v_i (t_\e, x_\e) + f_i(\hat t, \hat x, u_i(\hat t, \hat x))\langle n(\hat x), x_\e-y_\e\rangle.
\end{align}
Note that $u_i \in USC ([0,T]\times\bar \Omega)$ so we have $\limsup _{\e \to 0} u_i(t_\e, x_\e) \leq u_i(\hat t, \hat x)$.
Since $(t_\e, x_\e, y_\e) \to (\hat t, \hat x, \hat x)$ as $\e \to 0$, this inequality cannot be strict; if it were,
\eqref{eq:usclimit} shows that we must necessarily have $\liminf_{\e \to 0} v_i(t_\e,x_\e)< v(\hat t, \hat x)$ as well,
but this contradicts $v_i \in LSC([0,T]\times\bar \Omega)$.
An analogous argument shows $v_i(t_\e, x_\e)  \to v_i(\hat t, \hat x)$.

We now invoke the exterior sphere condition which implies the existence of $r>0$ s.t.
$$
\langle n(x_\e) , x_\e - \hat x\rangle > -\frac{1}{2r}|\hat x - x_\e|^2		  \quad \mbox{for any $\hat x \in \bar \Omega$ and $x_\e \in \partial \Omega$}.
$$
Differentiating gives
\begin{align*}
D_x\varphi_ \e(t,x,y)= \frac{1}{\e} (x-y) + 4|x -\hat x|^2 (x-\hat x) -f_i(\hat t, \hat x, u_i(\hat t, \hat x)) n(\hat x)
\end{align*}
and thus, if $x_\e \in \partial \Omega$ we have
\begin{align*}
&B_i(t_\e, x_\e, u_i(t_\e, x_\e), D_x \varphi_\e(t_\e, x_\e, y_\e)) = \langle n(x_\e), D_x\varphi_\e(t_\e, x_\e, y_\e ) \rangle +  f_i(t_\e, x_\e, u_i(t_\e, x_\e)) \\
=&\left\langle n(x_\e), \frac{1}{\e} (x_\e-y_\e) + 4 |x_\e- \hat x|^2 (x_\e-\hat x) - f_i(\hat t, \hat x, u_i(\hat t, \hat x)) n(\hat x)\right\rangle  + f_i(t_\e, x_\e, u_i(t_\e, x_\e)) \\
\geq&  -\frac{1}{2r \e}|x_\e -y_\e|^2 + 4|x_\e -\hat x|^2 \langle n(x_\e), x_\e-\hat x\rangle - f_i(\hat t, \hat x, u_i(\hat t, \hat x)) \langle n(x_\e), n(\hat x) \rangle + f_i(t_\e, x_\e, u_i(t_\e, x_\e)).
\end{align*}
Hence, as $\e \to 0$ we have
\begin{equation} \label{eq:BCcond1}
B_i(t_\e, x_\e, u_i(t_\e, x_\e), D_x\varphi_\e(t_\e, x_\e, y_\e))  \to D 
\end{equation}
for some $D\geq 0$. Similarly, for the supersolution $v$ we get for $y_\e \in \partial \Omega$
\begin{align*}
&B_i(t_\e, y_\e, v_i(t_\e, y_\e), -D_y \varphi_\e(t_\e, x_\e, y_\e)) = \langle n(y_\e), -D_y\varphi_\e(t_\e, x_\e, y_\e ) \rangle +  f_i(t_\e, y_\e, v_i(t_\e, y_\e)) \\
=&\left\langle n(y_\e), \frac{1}{\e} (x_\e-y_\e)  - 4|y_\e -  \hat y|^2 (y_\e-\hat x) - f_i(\hat t, \hat x, u_i(\hat t, \hat x)) n(\hat x)\right\rangle + f_i(t_\e, y_\e, v_i(t_\e, y_\e)) \\
\leq & \frac{1}{r\e}|x_\e - y_\e|^2 -4 |y_\e -\hat x|^2 \langle n(x_\e),  y_\e-\hat x \rangle-f_i(\hat t, \hat x, u_i(\hat t, \hat x)) \langle n(y_\e), n(\hat x) \rangle + f_i(t_\e, y_\e, v_i(t_\e, y_\e))
\end{align*}
and thus that, as $\e \to 0$,
\begin{equation} \label{eq:BCcond2}
B_i(t_\e, y_\e, v_i(t_\e, y_\e), -D_y \varphi_\e(t_\e, x_\e, y_\e) \to \tilde D 
\end{equation}
for some $\tilde D \leq0$ (since $u_i > v_i$ at $(\hat t, \hat x)$ and $f_i$ is non-decreasing by assumption \eqref{ass:BC1}).
Hence, if
$$
u_i(\hat t, \hat x) -v_i(\hat t, \hat x)=\delta>0
$$
is a positive maximum of $u_i-v_i$ over $(0,T) \times \bar \Omega$ we must have
\begin{align*}
&\mbox{$\min \left\{ a + {F_i}\left(t_\e,x_\e,u_i(t_\e,x_\e), D_x \varphi_\e(t_\e, x_\e,y_\e), X \right),  u_{i}(t_\e,x_\e) - \M_i u(t_\e,x_\e) \right\} \leq 0$} \\
&\mbox{for $(a,D_x \varphi_\e, X) \in \supjetbar u_i(t_\e, x_\e)$, \quad and}  \\
&\mbox{$\min \left\{ \tilde a + {F_i}\left(t_\e,y_\e,v_i(t_\e,y_\e), -D_y \varphi_\e(t_\e, x_\e,y_\e), Y \right),  v_{i}(t_\e,x_\e) - \M_i v(t_\e,x_\e) \right\} \geq 0$}\\
&\mbox{for $(\tilde a,-D_y \varphi_\e, Y) \in \subjetbar v_i(t_\e, x_\e)$}
\end{align*}
provided $\e$ is small enough.
Indeed, this holds by continuity of $F_i$ and since, by Step 1, we can consider $u_i$ and $v_i$ to be sub- and supersolutions to \eqref{problem} with boundary conditions $\check B_i : =B_i + \eta\leq 0$ and  $\hat B_i : =B_i -\eta \geq 0 $, respectively. Thus, since we have \eqref{eq:BCcond1} and \eqref{eq:BCcond2}, the boundary conditions cannot be satisfied and therefore the equation must hold on the boundary (in the sub- /supersolution sense).

\subsubsection*{Step 3. Avoiding the obstacle.}

We will now argue as in Ishii-Koike \cite{IK91} to ensure that the subsolution (more precisely, at least one component of it) lies strictly above its obstacle at $(\hat t, \hat x)$. To do this, recall the set $\I$  and assume that $u_i(\hat t, \hat x) \leq \mathcal M_i u(\hat t, \hat x)$, i.e.,
$$
u_i(\hat t, \hat x) \leq \max_{i \neq j}\{ {u_j(\hat t, \hat x) -c_{ij}(\hat t, \hat x)}\},
$$
whenever $i \in \I$. This implies the existence of $k \in \{1, \dots, i-1, i+1,\dots, m\}$ s.t.
$$
u_i(\hat t, \hat x) + c_{ik}(\hat t, \hat x) \leq u_k(\hat t, \hat x)
$$
Moreover, since $v$ is a supersolution we have
$$
v_i(\hat t, \hat x) \geq v_k(\hat t, \hat x) -c_{ik}(\hat t, \hat x).
$$
Combining the above two inequalities yield
$$
u_i(\hat t, \hat x) -v_i(\hat t, \hat x) \leq  u_k(\hat t, \hat x) -v_k (\hat t, \hat x)
$$
but since $(\hat t, \hat x)$ is a maximum of $u_i-v_i$ and $i \in \I$ this must in fact be an equality and $k \in I$ as well. Repeating this as many times as necessary, we find the existence of a sequence of indices $\{i_1, i_2, \dots, i_p, i_1\}$, $i_p \neq i_{p+1}$ such that
$$
u_{i_1}(\hat t, \hat x) + c_{i_1 i_2}(\hat t, \hat x) + c_{i_2 i_3}(\hat t, \hat x)  + \dots c_{i_p i_1}(\hat t, \hat x) \leq u_{i_1}(\hat t, \hat x)
$$
which implies
$$c_{i_1 i_2}(\hat t, \hat x) + c_{i_2 i_3}(\hat t, \hat x)  + \dots c_{i_p i_1}(\hat t, \hat x) \leq 0,
$$ a contradiction to \eqref{ass:O1} $(iii)$. Thus, there exists at least one index $i \in \I$ s.t.
$$
u_i (\hat t, \hat x) > \M_i u(\hat t, \hat x).
$$
Since $u_i(x_\e, t_\e) \to u_i(\hat t, \hat x)$ and $v_i(x_\e, t_\e)\to v_i(\hat t, \hat x)$ for all $i \in \{1, \dots, m\}$, we can then conclude from this and Step 2 that
\begin{align}\label{eq:liftedandinterior}
&\mbox{$ a + {F_i}\left(t_\e,x_\e,u_i(t_\e,x_\e), D_x \varphi_\e(t_\e, x_\e,y_\e), X \right) \leq 0$ for $(a,D_x \varphi_\e, X) \in \supjetbar u_i(t_\e, x_\e)$, \quad and} \notag  \\
&\mbox{$\tilde a + {F_i}\left(t_\e,y_\e,v_i(t_\e,y_\e), -D_y \varphi_\e(t_\e, x_\e,y_\e), Y \right) \geq 0$ for $(\tilde a,-D_y \varphi_\e, Y) \in \subjetbar v_i(t_\e, y_\e)$}
\end{align}
for at least one $i \in \I$ and $\e$ small enough.

\subsubsection*{Step 4. Reaching the contradiction}

We are now ready to reach our final contradiction. To do this, we will use the following lemma, the so called maximum principle for semi-continuous functions. Lemma \ref{lemma:thm8.3} corresponds to Theorem 8.3 of \cite{CIL92} in a less general but for our purposes sufficient form.
\begin{lemma} \label{lemma:thm8.3}
Suppose that $(t_\e, x_\e, y_\e)$ is a maximum point of
$$
u_i(t,x)  -v_i(t,y) -\varphi_\e(t,x,y)
$$
over $(0,T) \times \bar \Omega$. Then, for each $\theta >0$ there are $X, Y \in \S(n)$ such that
\begin{align*}
(i) &\,(a, D_x \varphi_\e(t_\e, x_\e, y_\e), X) \in \supjetbar u_i(t\e, x_\e) \quad\mbox{and}\quad \\
&\, (\tilde a, -D_y \varphi_\e(t_\e,x _\e, y_\e), Y) \in \subjetbar v_i(t_\e, y_\e), \\
(ii)&\,
\left (
\begin{matrix}
X&0\\
0&-Y
\end{matrix} \right )
\leq A+ \theta A^2, \\
(iii)& \,a- \tilde a = \frac{\partial}{\partial t} \varphi_\e(t_\e, x_\e, y_\e),
\end{align*}
where $A :=D_x^2 \varphi_\e(t_\e, x_\e, y_\e)$ is the Hessian matrix of $\varphi(t,x,y)$ (w.r.t. $x$ and $y$).
\end{lemma}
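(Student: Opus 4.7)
The plan is to recognize Lemma \ref{lemma:thm8.3} as an instance of the parabolic maximum principle for semi-continuous functions (Theorem 8.3 of \cite{CIL92}) applied to a sum of two USC functions with a smooth test function. Specifically, I would set $w_1(t,x) := u_i(t,x)$ and $w_2(t,y) := -v_i(t,y)$, both of which are USC on $(0,T)\times\bar\Omega$ (since $u_i\in USC$ and $v_i\in LSC$), and observe that by hypothesis
$$
w_1(t,x) + w_2(t,y) - \varphi_\e(t,x,y)
$$
attains a local maximum relative to $(0,T)\times\bar\Omega\times\bar\Omega$ at $(t_\e, x_\e, y_\e)$. The test function $\varphi_\e$ is visibly $\mathcal C^{1,2}$ in $(t,x,y)$, so the hypotheses of the CIL maximum principle are met.

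Next I would invoke Theorem 8.3 of \cite{CIL92} with $k=2$: for each $\theta>0$ there exist numbers $a_1,a_2 \in \R$ and symmetric matrices $X_1,X_2$ such that
$$
(a_1, D_x\varphi_\e(t_\e,x_\e,y_\e), X_1) \in \supjetbar w_1(t_\e,x_\e),\quad (a_2, D_y\varphi_\e(t_\e,x_\e,y_\e), X_2) \in \supjetbar w_2(t_\e,y_\e),
$$
together with the matrix estimate on $\mathrm{diag}(X_1, X_2)$ and the time-derivative identity $a_1 + a_2 = \partial_t \varphi_\e(t_\e,x_\e,y_\e)$. Translating back to $v_i$ via $\supjetbar(-v_i) = -\subjetbar v_i$, I would set $a:=a_1$, $\tilde a := -a_2$, $X:=X_1$, and $Y := -X_2$. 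Then (i) and (iii) of the lemma follow immediately, with the sign on the $y$-gradient coming from $D_y\varphi_\e = -(-D_y\varphi_\e)$.

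For assertion (ii), I would use that the Hessian of $\varphi_\e$ with respect to $(x,y)$ is precisely the matrix $A$ in the statement, so that the abstract bound supplied by Theorem 8.3 reduces, after the sign flip $X_2 \mapsto -Y$, to the stated inequality
$$
\begin{pmatrix} X & 0 \\ 0 & -Y \end{pmatrix} \leq A + \theta A^2.
$$
The only real subtlety to verify is that the jets are taken with respect to $\bar\Omega$ (i.e., we use the \emph{closures} $\supjetbarbar$ and $\subjetbarbar$ when $x_\e$ or $y_\e$ lies on $\partial\Omega$), which is precisely the setting in which Theorem 8.3 is stated in \cite{CIL92}. Thus no new analytic work is required beyond bookkeeping of signs and identifications; the main obstacle is simply checking that the formulation in \cite{CIL92} matches our parabolic setup and that the time-derivative identity is correctly transported through the sign change in the second slot.
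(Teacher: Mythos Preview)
Your proposal is correct and matches the paper's treatment: the paper does not prove this lemma but simply states it as a restatement of Theorem~8.3 in \cite{CIL92}, and your reduction via $w_1=u_i$, $w_2=-v_i$ together with the sign bookkeeping $(\tilde a,Y)=(-a_2,-X_2)$ is exactly how one unpacks that citation. The only point worth flagging is that when $x_\e$ or $y_\e$ lies on $\partial\Omega$ the jets produced by Theorem~8.3 are relative to $\bar\Omega$ (i.e.\ $\bar{\mathcal P}^{2,\pm}_{\bar\Omega}$), which you already note; the paper's notation $\supjetbar$ here should be read in that sense.
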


We first note that
\begin{align*}
\frac{\partial}{\partial t} \varphi_\e(t_\e, x_\e, x_\e) =& 2  (t_\e-\hat t)\\
D_x\varphi_\e (t_\e, x_\e, y_\e) =  & \frac{1}{\e} (x_\e - y_\e) + 4 |x_\e -\hat x|^2(x_\e -\hat x) - f_i(\hat t, \hat x, u_i(\hat t, \hat x)) n(\hat x) \\
-D_y \varphi_\e(t_\e,x_\e, y_\e) = & \frac{1}{\e}(x_\e - y_\e) - 4 |y_\e -\hat x|^2 (y_\e-\hat x) - f_i(\hat t, \hat x, u_i(\hat t, \hat x)) n(\hat x) \\
A:= D^2_x \varphi_\e(t_\e,x_\e, y_\e) = & \frac{1}{\e}
\left (\begin{matrix}
I &-I \\
-I& I
\end{matrix} \right)
+ \O(|x_\e - \hat x|^2 + |y_\e - \hat x|^2)
\end{align*}
which gives
$$
A^2 = \frac{2}{\e^2} \left (\begin{matrix}
I &-I \\
-I& I
\end{matrix} \right) + \O\left (\frac{1}{\e}(|x_\e -\hat x|^2 +|y_\e-\hat y|^2) + |x_\e - \hat x|^4 + |y_\e - \hat x|^4 \right ).
$$
Choosing $\theta = \e$ in Lemma \ref{lemma:thm8.3}  $(ii)$ gives the existence of
$$
(a, D_x \varphi_\e(t_\e,x_\e, x_\e), X) \in \supjetbar u_i(t_\e, x_\e) \quad\mbox{and}\quad (\tilde a, -D_y \varphi_\e(t_\e, x_\e,x_\e), Y) \in \subjetbar v_i( t_\e, x_\e),
$$
with
\begin{align*} 
\left (
\begin{matrix}
X&0\\
0&-Y
\end{matrix} \right ) \leq & \frac{3}{\e}
\left (\begin{matrix}
I &-I \\
-I& I
\end{matrix} \right)
+ \O(|x_\e - \hat x|^2 + |y_\e - \hat x|^2).
\end{align*}
Note in particular that this implies that for any $\xi >0$ fixed there exists $\e>0$ such that the above holds with
\begin{align} \label{eq:XYineq}
\left (
\begin{matrix}
X&0\\
0&-Y
\end{matrix} \right ) \leq  \frac{3}{\e}
\left (\begin{matrix}
I &-I \\
-I& I
\end{matrix} \right)
+  \xi   \left (\begin{matrix}
I&0\\
0&I
\end{matrix} \right ) \iff \left (
\begin{matrix}
X-\xi I&0\\
0&-(Y+\xi I)
\end{matrix} \right ) \leq  \frac{3}{\e}
\left (\begin{matrix}
I &-I \\
-I& I
\end{matrix} \right).
\end{align}
Moreover, we have from \eqref{eq:liftedandinterior} that
$$
a + {F_i}\left(t_\e,x_\e,u_i(t_\e,x_\e), D_x \varphi_\e(t_\e, x_\e,y_\e), X \right) \leq 0 \quad \mbox{and} \quad \tilde a + {F_i}\left(t_\e,x_\e,v_i(t_\e,y_\e), -D_y \varphi_\e(t_\e, x_\e), Y \right) \geq 0
$$
which implies
\begin{equation} \label{eq:contradict}
2  (t_\e-\hat t) = a-\tilde a
\leq {F_i}\left(t_\e,y_\e,v_i(t_\e,y_\e), -D_y \varphi_\e(t_\e,x_\e,y_\e), Y \right)  -  {F_i}\left(t_\e,x_\e,u_i(t_\e,x_\e), D_x \varphi_\e(t_\e, x_\e,y_\e), X \right)
\end{equation}
by Lemma \ref{lemma:thm8.3} $(iii)$.

What remains is to show that \eqref{eq:contradict} is inconsistent with \eqref{eq:maxpointdelta}, \eqref{eq:XYineq} and the assumptions \eqref{ass:F1} - \eqref{ass:F3}. To see this, note that
\begin{align}	\label{eq:firstineq}
&{F_i}\left(t_\e,y_\e,v_i(t_\e,y_\e), -D_y \varphi_\e(t_\e,x_\e,y_\e), Y \right)  -  {F_i}\left(t_\e,x_\e,u_i(t_\e,x_\e), D_x \varphi_\e(t_\e, x_\e,y_\e), X \right) \notag \\
\leq & {F_i}\left(t_\e,y_\e,v_i(t_\e,y_\e), -D_y \varphi_\e(t_\e,x_\e,y_\e), Y \right)  -  {F_i}\left(t_\e,x_\e,u_i(t_\e,x_\e), -D_y \varphi_\e(t_\e, x_\e,y_\e), X \right) \notag \\
& + \omega(4(|\hat x-x_\e|^3 + |\hat x- y_\e|^3)) \notag \\
\leq & {F_i}\left(t_\e,y_\e,u_i(t_\e,x_\e), -D_y \varphi_\e(t_\e,x_\e,y_\e), Y \right)  -  {F_i}\left(t_\e,x_\e,u_i(t_\e,x_\e), -D_y \varphi_\e(t_\e, x_\e,y_\e), X \right) \notag \\
&+ \omega(4(|\hat x-x_\e|^3 + |\hat x- y_\e|^3)) - \gamma(u_i(t_\e, x_\e) - v_i(t_\e,y_\e))
\end{align}
where we have used, in turn, \eqref{ass:F2},
$$|D_x  \varphi_\e(t_\e,x_\e, y_\e)- (-D_y \varphi_\e(t_\e,x_\e, y_\e))|=  4(|\hat x-x_\e|^2(x_\e-\hat x)+ |\hat x- y_\e|^2 (y_\e- \hat x)),$$
and \eqref{ass:F1} (more precisely, \eqref{eq:F1star}).
Since \eqref{eq:XYineq} holds we get from \eqref{ass:F3} that
\begin{align*}
F_i(t_\e,y_\e, r, p, Y+ \xi I) - F_i(t_\e, x_\e, r, p, X-\xi I) \leq \omega(\frac{1}{\e} |x_\e-y_\e|^2 + |x_\e-y_\e|(|p|+1))
\end{align*}
and from \eqref{ass:F2} that
\begin{align*}
F_i(t_\e,y_\e, r, p, Y+ \xi I) - F_i(t_\e, x_\e, r, p, X-\xi I) \geq F_i(t_\e,y_\e, r, p, Y) - F_i(t_\e, x_\e, r, p, X) - 2 \omega(\xi)
\end{align*}
and by combining these displays we have
\begin{equation} \label{eq:secondineq}	
F_i(t_\e,y_\e, r, p, Y) - F_i(t_\e, x_\e, r, p, X) \leq \omega(\frac{1}{\e} |x_\e-y_\e|^2 + |x_\e-y_\e|(|p|+1)) + 2 \omega (\xi).
\end{equation}
Putting \eqref{eq:contradict}, \eqref{eq:firstineq}, and \eqref{eq:secondineq} together gives
\begin{align*}
2  (t_\e-\hat t)  \leq  &\, \omega(4(|\hat x-x_\e|^3 + |\hat x- y_\e|^3)) - \gamma(u_i(t_\e, x_\e) - v_i(t_\e,y_\e)) \\
& + 2 \omega (\xi) + \omega(\frac{1}{\e} |x_\e-y_\e|^2 + |x_\e-y_\e|(|-D_y \varphi_\e(t_\e, x_\e, y_\e)|+1)).
\end{align*}
Taking the limit as $\e \to 0$ and recalling $\frac{1}{\e}|x_\e -y_\e|^2 \to 0$ we arrive at
\begin{equation*}
\gamma \delta \leq 2 \omega (\xi)
\end{equation*}
which is a contradiction since $\xi>0$ was arbitrary and $\gamma \delta > 0$.
$\, {}_\blacksquare$

\noindent \newline
\begin{proof}[Proof of Proposition \ref{prop:comparisonnoboundary}]
Assume first that $u_i \leq v_i$ on $(0,T) \times \partial \Omega$ for all $i \in \{1,\dots, m\}$. As above, assume the existence of $\I$ and $(\hat t, \hat x)$ such that
$$
0< \delta = u_i(\hat t, \hat x)  -v_i(\hat t, \hat x)
$$
for all $i \in \I$. Now, let
\begin{equation*}
\varphi_\epsilon (t,x,y) = \frac{1}{2\epsilon} |x-y|^2 + |x-\hat x|^4 + |y-\hat x|^4 + |t-\hat t|^2
\end{equation*}
and construct $\Phi_\e (t,x,y) := u_i(t,x)  -v_i (t,y) - \varphi_\e(t,x,y)$.
Calculations analogous to those in the first half of Step 2 above shows
$$
\frac{1}{\e} |x_\e-y_\e|\to 0, \quad x_\e \to  \hat x, \quad u_i(t_\e,x_\e) \to u_i (\hat t, \hat x), \quad \mbox{and} \quad v_i(t_\e, y_\e) \to v_i(\hat t, \hat x)
$$
and we can thus simply repeat Step 3-4 above to get the desired contradiction.

For the general case, let
$$
K :=\max_{i \in\{1,\dots, m\}} \sup_{((0,T) \times \partial \Omega) \cup (\{0\} \times \bar \Omega)} (u_i-v_i)^+ \geq 0
$$
and note that $\tilde u:=u-K \leq u$ is a subsolution to \eqref{problem} in the sense of Definition \ref{def:solution} $(i)$ and $(iii)$ since $\tilde u_i(0,\cdot) \leq u_i(0,\cdot) \leq g_i$,  $K$ is independent of $i$ and
$$
a+ F(t,x,\tilde u_i, p, X) \leq a + F(t,x, u_i, p, X)
$$
by \eqref{ass:F1}. Moreover, $\tilde u \leq v$ on $(0,T) \times \partial \Omega$ so we can apply the above result to conclude that
$$
 \tilde u = u- K \leq v  \iff u-v \leq K
$$
and we are done.
\end{proof}

\noindent \newline
\begin{proof}[Proof of Corollary \ref{corr:mixedcomparison}]
If $u$ is a
viscosity subsolution, then so is $u-K$ for all $K>0$. It thus suffices to
prove that if $u\leq v$ on $((0,T) \times \partial \Omega )\setminus G)$, then $u\leq v$ in $[0,T) \times \bar \Omega$. If $G=(0,T) \times \partial \Omega$, this implication and its proof is identical to Theorem \ref%
{thm:comparison} and if $G= \emptyset$ it is identical to Proposition \ref{prop:comparisonnoboundary}. If $G \subset (0,T) \times \partial \Omega $ is a non-empty proper subset, we know by
assumption that $u\leq v$ on $((0,T) \times \partial \Omega ) \setminus G$ and so the maximum point $(\hat t, \hat x)$ defined in \eqref{eq:maxpointdelta} must belong to the set $G$ where the boundary condition is satisfied. Hence, we can follow the proof of Theorem \ref{thm:comparison} to conclude that $u\leq v$ in $[0,T) \times \bar \Omega$.
\end{proof}


\setcounter{equation}{0} \setcounter{theorem}{0}

\section{Proof of Theorem \ref{thm:existence}: Perron's method and barrier construction.} 

Throughout the section, we assume that the assumptions of Theorem \ref{thm:existence} hold, i.e., that \eqref{ass:D1}, \eqref{ass:F1} - \eqref{ass:F3}, \eqref{ass:BC1}, \eqref{ass:O1} - \eqref{ass:O3}, and \eqref{ass:compatibility} hold. Our proof of existence follows the machinery of Perron's method. In particular, we have the following result.
\begin{proposition} \label{prop:Perron}
Assume that, for each $i \in \{1, \dots, m\}$ and $\hat x \in \bar \Omega$ there exist families of continuous viscosity sub- and supersolutions, $\{u^{i,\hat x,\epsilon}\}_{\epsilon >0}$ and $\{v^{i,\hat x,\epsilon}\}_{\epsilon >0}$, to \eqref{problem} such that
$$
\sup_{\epsilon} u^{i,\hat x,\epsilon}_i(0,\hat x) = g_i(\hat x) = \inf_{\epsilon} v^{i,\hat x,\epsilon}_i(0,\hat x).
$$
Then,
$$
w(t,x) =\sup \{u(t,x)\,: \,\mbox{$u$ is a subsolution to \eqref{problem}} \}
$$
is a viscosity solution to \eqref{problem}.
\end{proposition}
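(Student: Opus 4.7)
The plan is to run Perron's method adapted to the system with interconnected obstacles and Neumann boundary conditions, closing the argument via the comparison principle of Theorem \ref{thm:comparison}.

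First I would check that $w$ is well-defined and sandwiched between the given barriers. Non-emptiness of the class of subsolutions follows from the existence of $u^{i,\hat x,\epsilon}$, while boundedness above comes from applying Theorem \ref{thm:comparison} to any candidate subsolution against any super-barrier $v^{i,\hat x,\epsilon}$. From these inequalities one extracts pointwise
$$u^{i,\hat x,\epsilon}_i(t,x) \leq w_i(t,x) \leq v^{i,\hat x,\epsilon}_i(t,x),$$
and combining with the pinching hypothesis $\sup_\epsilon u^{i,\hat x,\epsilon}_i(0,\hat x) = g_i(\hat x) = \inf_\epsilon v^{i,\hat x,\epsilon}_i(0,\hat x)$ yields $w^*_i(0,\hat x) \leq g_i(\hat x) \leq w_{*,i}(0,\hat x)$ for every $\hat x \in \bar\Omega$.

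Next I would show that the upper semicontinuous envelope $w^*$ is a viscosity subsolution in the sense of Definition \ref{def:solution}. This is the classical "supremum of subsolutions is a subsolution" argument. Assuming for contradiction that $w^*$ fails at some $(\hat t,\hat x)$ in component $i$ (so that the PDE, obstacle, and -- if $\hat x\in\partial\Omega$ -- the boundary inequality all strictly fail), one replaces $w_i$ on a small parabolic neighborhood by $\max\{w_i, w_i + \kappa\psi\}$ for a smooth bump $\psi$, producing a strictly larger admissible subsolution and contradicting the maximality of $w$. The system coupling is handled by observing that the obstacle $\mathcal{M}_i u = \max_{j\neq i}\{u_j - c_{ij}\}$ only imposes a one-sided constraint, so perturbing component $i$ upward away from its own obstacle cannot destroy other components' subsolution properties; any residual interaction on the obstacle side is controlled by \eqref{ass:O3}.

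The subtler step is to prove that the lower semicontinuous envelope $w_*$ is a supersolution. The standard route is a dual bumping argument: if $w_*$ fails the supersolution condition at $(\hat t, \hat x)$ in some component $i$, one constructs a smooth strict classical subsolution $\phi$ on a small cylinder lying slightly above $w_*$ at $(\hat t,\hat x)$ but below $w$ on the parabolic boundary of the cylinder. Then $\max\{w, \phi\}$ (appropriately extended) would be a subsolution strictly larger than $w$ somewhere, contradicting the definition of $w$ as the supremum. The main obstacle I expect here is verifying that such a local $\phi$ remains compatible with the interconnected obstacle: raising component $i$ could push it against $\mathcal{M}_i w_*$, and one needs the triangle-type structural assumption \eqref{ass:O3} together with the noloop condition \eqref{ass:O2} to propagate the failure around the index cycle and close the combinatorial loop -- this mirrors the obstacle-avoidance argument used in Step 3 of the proof of Theorem \ref{thm:comparison}.

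Finally, with $w^*$ a subsolution and $w_*$ a supersolution on $[0,T)\times\bar\Omega$ satisfying $w^*(0,\cdot) \leq g \leq w_*(0,\cdot)$, Theorem \ref{thm:comparison} gives $w^* \leq w_*$; combined with the trivial $w_* \leq w^*$ this forces $w = w^* = w_*$. Hence $w$ is continuous and a viscosity solution of \eqref{problem}, as claimed.
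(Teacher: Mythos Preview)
Your overall architecture---sandwich $w$ between barriers, show $w^\ast$ is a subsolution and $w_\ast$ a supersolution, then close with Theorem \ref{thm:comparison}---is exactly what the paper does. But two of your intermediate steps are off.

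\textbf{The subsolution part is not a bump argument.} You propose that if $w^\ast$ fails the subsolution inequality in component $i$, you can replace $w_i$ by $\max\{w_i, w_i+\kappa\psi\}$ and get a strictly larger subsolution. This does not work: failure of the subsolution inequality means that for some superjet $(a,p,X)$ both $a+F_i(\ldots)>0$ and $w^\ast_i-\M_i w^\ast>0$ (and $B_i>0$ on the boundary). Raising $w_i$ by a bump only increases $F_i$ (by \eqref{ass:F1}) and increases $w_i-\M_i w$, so the lifted function is \emph{not} a subsolution near the failure point and no contradiction arises. The correct argument---the one the paper uses---is the classical stability/limiting argument: take subsolutions $u^n$ with $u^n_i(t_n,x_n)\to w^\ast_i(\hat t,\hat x)$, transfer the superjet of $w^\ast_i$ to superjets of $u^n_i$ via Proposition~4.3 of \cite{CIL92}, use the subsolution inequality for each $u^n$, and pass to the limit by continuity of $F_i$ and $f_i$.

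\textbf{The supersolution bump does not need \eqref{ass:O2} or \eqref{ass:O3}.} Your concern that ``raising component $i$ could push it against $\M_i w_\ast$'' and that one must ``propagate the failure around the index cycle'' is misplaced. In the paper's construction one modifies only component $i$, setting $\check u_i=\max\{w^\ast_i,\tilde w\}$ locally and $\check u_j=w^\ast_j$ for $j\neq i$. For component $i$, the assumed strict failure of the supersolution inequality (together with $\M_i w_\ast\leq \M_i w^\ast$) gives directly that $\tilde w$ satisfies the subsolution inequality on a small neighborhood. For components $j\neq i$, increasing $\check u_i$ only \emph{increases} $\M_j\check u$, hence $\check u_j-\M_j\check u\leq w^\ast_j-\M_j w^\ast$, and the subsolution property of $w^\ast$ carries over immediately. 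No combinatorial loop, no triangle inequality, no no-loop condition is invoked here; those assumptions enter only in the comparison principle and in the explicit barrier construction (Proposition \ref{prop:barriers}), not in the Perron step itself.
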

With this result given, what remains is to construct appropriate barriers, i.e., families of viscosity sub- and supersolutions taking on the correct initial data. More specifically, we prove the following.

\begin{proposition}\label{prop:barriers}
For any $\hat x \in \bar \Omega$ and $\epsilon > 0$, there exist non-negative constants $A, B, C$ and $\kappa$ such that $U^{\hat x, \epsilon} :=(U_1^{\hat x, \epsilon}, \ldots, U^{\hat x, \epsilon}_m)$ and $V^{i, \hat x, \epsilon}:=(V_1^{i, \hat x, \epsilon}, \dots, V^{i, \hat x, \epsilon}_m)$,
\begin{align*}
U^{\hat x,\epsilon}_j(t,x)&=g_j(\hat x) - A(\varphi(x) - \varphi(\hat x)) - B \exp(\kappa \varphi(x)) |x-\hat x|^2- \epsilon - Ct, \\
V^{i,\hat x,\epsilon}_j(t,x) &= g_i(\hat x)  + A(\varphi(x) - \varphi(\hat x)) +B \exp(\kappa \varphi(x)) |x-\hat x|^2 + \epsilon + Ct + c_{ij} (t,x),
\end{align*}
where $\varphi(x)$ is given in Lemma \ref{lemma:varphi} and $i\in\{1,\dots, m\}$, are viscosity sub- and supersolutions to \eqref{problem}, respectively. Moreover,
$$
\sup_{\epsilon} U^{\hat x,\epsilon}_i(0,\hat x) = g_i(\hat x) = \inf_{\epsilon} V^{i,\hat x,\epsilon}_i(0,\hat x).
$$
\end{proposition}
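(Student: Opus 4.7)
The plan is to verify Definition \ref{def:solution} directly for each explicit candidate, splitting the verification into four pieces: the initial data, the obstacle inequality, the Neumann boundary inequality and the interior PDE inequality. After computing
\begin{align*}
DU^{\hat x,\epsilon}_j(t,x) &= -A\,D\varphi(x) - B\kappa\,e^{\kappa\varphi(x)}|x-\hat x|^2\,D\varphi(x) - 2B\,e^{\kappa\varphi(x)}(x-\hat x),\\
D^2 U^{\hat x,\epsilon}_j(t,x) &= -A\,D^2\varphi(x) + O\bigl(1 + |x-\hat x|^2\bigr),
\end{align*}
and the analogous expressions for $V^{i,\hat x,\epsilon}$ (carrying additional contributions from $c_{ij}$), the task reduces to choosing the constants in the order $\kappa$, $A$, $B$, $C$, with a small amount of self-consistency between $A$ and the range of the barrier. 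The identities $\sup_\epsilon U^{\hat x,\epsilon}_i(0,\hat x)=g_i(\hat x)=\inf_\epsilon V^{i,\hat x,\epsilon}_i(0,\hat x)$ are immediate once $c_{ii}\equiv 0$ (from \eqref{ass:O1}(ii)) is invoked, and the pointwise initial inequalities for $x\neq \hat x$ follow from uniform continuity of $g$ (to absorb $g_i(\hat x)-g_i(x)$ into $\epsilon$ near $\hat x$) together with \eqref{ass:compatibility} and the fact that $B\,e^{\kappa\varphi(x)}|x-\hat x|^2$ can be made to dominate $|g_i(\hat x)-g_k(x)|$ on $\{|x-\hat x|\geq \delta\}$ by taking $B$ large.

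The obstacle inequality is vacuous for the subsolution once the PDE piece is verified, but it is the conceptually critical step for the supersolution and the reason for the strengthening \eqref{ass:O3}. Since all terms in $V^{i,\hat x,\epsilon}_j$ other than $c_{ij}(t,x)$ are independent of $j$, one has
\[
V^{i,\hat x,\epsilon}_j(t,x) - \bigl(V^{i,\hat x,\epsilon}_k(t,x) - c_{jk}(t,x)\bigr) = c_{ij}(t,x) - c_{ik}(t,x) + c_{jk}(t,x) \geq 0
\]
for all $k\neq j$, precisely by \eqref{ass:O3}, so $V^{i,\hat x,\epsilon}_j - \M_j V^{i,\hat x,\epsilon}\geq 0$ holds pointwise. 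For the Neumann condition I would combine $\langle n(x),D\varphi(x)\rangle\geq 1$ on $\partial\Omega$ from Lemma \ref{lemma:varphi} with the uniform exterior ball radius $r>0$ provided by \eqref{ass:D1*}, which yields $\langle n(x),x-\hat x\rangle \geq -\tfrac{1}{2r}|x-\hat x|^2$; plugging this into $\langle n(x),DU^{\hat x,\epsilon}_j\rangle$ gives
\[
\langle n(x), DU^{\hat x,\epsilon}_j\rangle \leq -A + B\,e^{\kappa\varphi(x)}|x-\hat x|^2\bigl(\tfrac{1}{r}-\kappa\bigr),
\]
which is bounded above by $-A$ once $\kappa\geq 1/r$. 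Choosing $A$ larger than the sup of $|f_j|$ over the relevant compact set and (for the supersolution) also of $|\langle n,Dc_{ij}\rangle|$ — both finite by continuity and \eqref{ass:O1}(i) — then yields the Neumann inequality in both directions.

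The interior PDE inequality is handled last. With $\kappa,A,B$ fixed, the values and derivatives of $U^{\hat x,\epsilon}$ and $V^{i,\hat x,\epsilon}$ are all uniformly bounded on the compact set $[0,T]\times\bar\Omega$, so continuity of $F_j$ delivers a uniform bound $|F_j(t,x,U^{\hat x,\epsilon}_j,DU^{\hat x,\epsilon}_j,D^2U^{\hat x,\epsilon}_j)|\leq M$; taking $C\geq M$ yields $\partial_t U^{\hat x,\epsilon}_j + F_j \leq -C+M\leq 0$, and enlarging $C$ further to also absorb $\sup|\partial_t c_{ij}|$ handles the supersolution. The main obstacle in the whole argument is the supersolution obstacle step of the second paragraph: this is the only place where \eqref{ass:O3} (as opposed to the weaker no-loop condition \eqref{ass:O2} used for comparison) is invoked, and it is the need to saturate that triangle inequality that dictates the appearance of $c_{ij}(t,x)$ in the very definition of the supersolution barrier $V^{i,\hat x,\epsilon}_j$.
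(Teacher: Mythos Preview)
Your proposal follows essentially the same route as the paper: obstacle via \eqref{ass:O3}, Neumann via the ball condition combined with $\langle n, D\varphi\rangle \geq 1$ and a large enough $\kappa$, interior PDE via a large $C$, and initial data via a large $B$. The one step you leave genuinely incomplete is the choice of $A$. Your phrase ``the sup of $|f_j|$ over the relevant compact set'' is circular, because $f_j$ is evaluated at $r = U_j$ (resp.\ $V_j$), whose range on $[0,T]\times\bar\Omega$ depends on $A,B,C$; the parenthetical ``self-consistency'' remark acknowledges this but does not resolve it. The paper breaks the loop by exploiting the monotonicity of $f_j$ in $r$ from \eqref{ass:BC1}: once $V_j(t,x)\geq g_j(x)$ is ensured (which only needs $C\geq -\inf\partial_t c_{ij}$ after $B$ is fixed at $t=0$), one has $f_j(t,x,V_j)\geq f_j(t,x,g_j(x))$, and the bound $\tilde A$ with $\inf_{j,(t,x)} f_j(t,x,g_j(x)) > -\tilde A$ is independent of the barrier. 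Taking $A\geq \tilde A + \max_{i,j}\sup|\langle n,Dc_{ij}\rangle|$ then closes the Neumann step, after which $B$ and $C$ are chosen (for each $\epsilon$) with this $A$ fixed. The analogous argument with $U_j\leq g_j$ and $f_j(t,x,U_j)\leq f_j(t,x,g_j(x))$ handles the subsolution side.
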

Combining Propositions \ref{prop:Perron} and \ref{prop:barriers} above proves the existence part of Theorem \ref{thm:existence}. Uniqueness follows immediately from Theorem \ref{thm:comparison} and the definition of sub- and supersolutions. What needs to be done is thus to prove the above propositions. Being the non-standard one, we start with Proposition \ref{prop:barriers}.

\noindent \newline
\begin{proof}[Proof of Proposition \ref{prop:barriers}]
We prove only the supersolution property (the subsolution property is proven analogously but without the need to deal with the obstacle as in \eqref{eq:obstacledealing} below).  We need to show that $(V_1^{i, \hat x, \epsilon}, \cdots, V^{i, \hat x, \epsilon}_m)$ satisfies condition $(i)-(iii)$ of Definition \ref{def:solution}. To ease notation, we suppress the superindices $i, \hat x, \epsilon$ and write $V_j$ in place of $V^{i,\hat x,\epsilon}_j$.

We begin with condition $(iii)$. For any $\epsilon >0$ and $A$ given, we can ensure $V_j(0,x) \geq g_j(x), \forall x \in \bar \Omega$ and all $j \in \{1,\dots,m\}$ by choosing $B$ sufficiently large.
Indeed, this is possible since $\varphi, g$, and $c_{ij}$ are continuous and
$$
V_j(0,\hat x) = g_i(\hat x) + \epsilon + c_{ij}(0,\hat x) \geq g_j(\hat x),
$$
where the last inequality is due to Assumption \eqref{ass:compatibility}.
Moreover, we can let $\epsilon \to 0$ (by letting $B \to \infty $ if necessary) and since $c_{ii}(t,x)\equiv 0$ we thus have that
\begin{equation} \label{eq:initialensurance}
\inf_{\epsilon} V_i(0, \hat x) = g_i(\hat x).
\end{equation}
Note that the choice of $B=B(\epsilon,A)$ can be made with $C= \kappa = 0$; later increasing $\kappa$ and/or $C$ will only increase $V$ further while keeping \eqref{eq:initialensurance}.

We next turn to condition $(i)$ of Definition \ref{def:solution}.
Starting with the obstacle, we have
\begin{align} \label{eq:obstacledealing}
&V_j(t,x) - \mathcal M_j V(t,x) = V_j(t,x) - \max_{j \neq k} \{V_k (t,x) -c_{jk}(t,x) \} \notag \\
=& \,c_{ij}(t,x) - \max_{j \neq k} \{c_{ik}(t,x) - c_{jk}(t,x)\} = c_{ij}(t,x) - c_{i\hat k}(t,x) + c_{j\hat k}(t,x) \geq  0
\end{align}
where the last inequality is by assumption \eqref{ass:O3}.
Concerning the second part we observe that, for some $C$ large enough, it holds that
\begin{equation} \label{eq:operatorpart}
\partial_t V_i(t,x)+ F_i(t,x,V_i(t,x), D V_i(t,x), D^2 V_i(t,x)) \geq 0
\end{equation}
for $(a,p,X) \in \subjet V_i(t,x)$, for all $(t,x) \in (0,T)\times\Omega$
and all $i \in \{1,\dots,m\}$.
Indeed, this follows after noticing that $V_i$ smooth and $F_i$ continuous give a lower bound for
$F_i(t,x,V_i(t,x), D V_i(t,x), D^2 V_i(t,x))$ on the compact region $[0,T]\times \bar \Omega$.
This lower bound can be made independent from $C$ (and $\epsilon$) since $F_i$ is assumed non-decreasing with $V_i$ and $V_i$ is non-decreasing in $C$ (and $\epsilon$); it may however depend on $A$, $B$ and the parameter $\kappa$ to be chosen later.
Since $\partial_t V_j(t,x) = C + \partial_t c_{ij}(t,x)$ and the latter term is bounded in $[0,T]\times \bar \Omega$,
we conclude that inequality \eqref{eq:operatorpart} holds for large enough $C=C(A,B,\kappa)$.

What remains to verify is condition $(ii)$ of Definition \ref{def:solution}.
Note that, for $x \in \partial \Omega$, satisfying \eqref{eq:operatorpart}
in the classical sense does not ensure ditto in the viscosity sense. We therefore instead focus on the
Neumann condition and intend to prove that
\begin{equation*} 
\langle n(x), D V_j(t,x) \rangle + f_j(t, x,V_j(t,x)) \geq 0
\end{equation*}
for $(t,x) \in (0,T)\times\partial \Omega$ and for all $j \in \{1,\dots,m\}$.
Recall that we have chosen $B$ s.t. $V_j(0,x)  \geq g_j(x)$ in $\bar \Omega$.
Since $\partial_t c_{ij}(t,x)$ is bounded from below,
we can also ensure that
$
V_j(t,x) \geq g_j(x)
$
holds for all $(t,x) \in [0,T]\times \bar\Omega$ by increasing $C$ if necessary.
By the monotonicity property of $f$ (non-decreasing in $r$) we then have
$$
f_j(t,x,V_j(t,x)) \geq f_j(t,x,g_j(x)).
$$
Let $\tilde A$ be such that
$$
\min_{j \in \{1,\dots,m\}} \inf_{(t,x) \in [0,T]\times \partial \Omega}f_j(t,x,g_j(x))> -\tilde A.
$$
The Neumann boundary condition thus follows if we can show that
\begin{align}\label{tjohej_ny}
\langle n(x), D V_j(t,x) \rangle \geq \tilde A
\end{align}
for all $(t,x)\in (0,T)\times \partial \Omega$.
Differentiating $V_j(t,x)$ gives
$$
D V_j(t,x) = A D \varphi(x) + B \exp(\kappa \varphi(x) )  \left (2 (x-\hat x) +  \kappa  |x-\hat x|^2 D \varphi(x)  \right)+ D c_{ij}(t,x)
$$
and thus
\begin{align}\label{tjohej}
\langle n(x), D V_j(t,x) \rangle =&\,\langle n(x), A D \varphi(x) \rangle + \langle n(x), D c_{ij}(t,x)\rangle \notag \\
&+ B \exp(\kappa \varphi(x)) \langle n(x), 2 ( x-\hat x) + \kappa |x-\hat x|^2  D \varphi(x)\rangle
\end{align}
where $|\langle n(x), D c_{ij}\rangle|$ is bounded since $c_{ij}$ is smooth.
The exterior ball condition implies that
$$
\langle n( x) , x -\hat  x\rangle > - \frac{1}{2r}|\hat x - x|^2	 \quad \mbox{for any $\hat x \in \bar \Omega$ and $x \in \partial{\Omega}$},
$$
where $r$ depends on $\Omega$.
From this we see that, for $\kappa$ large enough and depending only on $\Omega$,
the last term in \eqref{tjohej} is non-negative since $\langle n(x), D\varphi(x) \rangle \geq 1$ by Lemma \ref{lemma:varphi}.

Using $\langle n(x), D\varphi(x) \rangle \geq 1$ once more it now only remains to pick $A$ such that $A - \max_{i,j \in \{1, \dots, m\}}|D c_{ij}| \geq \tilde{A}$ in order to fulfill \eqref{tjohej_ny}.
Observe that this choice of $A$ 
depends only on the data of the problem.
With $\kappa$ and $A$ now fixed we conclude that for any $\epsilon > 0$ we can choose $B = B(\epsilon, A)$ and then $C = C(A,B,\kappa)$ such that the above calculations hold.

Last, we note that the constructed barrier now satisfies the boundary condition in the classical sense. This suffices as Proposition 7.2 of \cite{CIL92} and the specific form of the function $B_i$ (cf. $(7.4)$ of \cite{CIL92}) then gives that the boundary condition also holds in the viscosity sense.
\end{proof}

\noindent \newline
\begin{proof} [Proof of Proposition \ref{prop:Perron}]
Note first that $w$ is well defined and bounded by the assumption of existence of sub- and supersolutions. Let $w_\ast:=(w_{\ast, 1}, \ldots, w_{\ast,m})$ and $w^\ast:=(w^\ast_1, \ldots, w^\ast_m)$ denote the lower- and upper semi-continuous envelopes of $w=(w_1, \dots, w_m)$, respectively, i.e., the largest LSC function that is dominated by $w$ and the smallest USC function that dominates $w$, respectively. By definition and Theorem \ref{thm:comparison} we have, for any $i, \hat x$ and $\epsilon>0$ fixed,
$$
w_\ast \leq w^\ast, \qquad u^{i,\hat x, \epsilon} \leq w^\ast \quad \mbox{and}\quad w_\ast \leq v^{i,\hat x, \epsilon}.
$$
The essence of Perron's method is to now prove that $w_\ast$ is a supersolution and $w^\ast $ a subsolution to \eqref{problem}, implying that also $w^\ast \leq w_\ast$ by Theorem \ref{thm:comparison} and thus that $w = w_\ast =w^\ast$ is a solution to \eqref{problem}.

To establish the subsolution property, we need to show
\begin{align*}
(i)&\quad \mbox{$\min \left\{a+ {F_i}(t,x,w^\ast_i(t,x), p,X),  w^\ast_{i}\left( x,t\right) - \M_i w^\ast(t,x) \right\} \leq 0,$} \,\mbox{$ (t,x) \in (0,T)\times \Omega, (a,p,X) \in \supjetbar w^\ast_i(t,x)$} \\
(ii)&\quad \mbox{$\min \left\{ a + {F_i}(t,x,w^\ast(t,x), p, X ),  w^\ast_{i}(t,x) - \M_i w^\ast(t,x) \right\}$}\wedge \mbox{$ \langle n(x), p \rangle + f_i(t,x,w_i^\ast(x)) \leq 0 $}, \\
&\quad  \mbox{for $(t,x) \in (0,T)\times \partial \Omega, (a,p,X) \in \supjetbar w^\ast_i(t,x)$ } \\
(iii)& \quad \mbox{$w^\ast_i(0,x) \leq g_i(x)$, $x\in \bar \Omega$}
\end{align*}
Statement $(iii)$ follows immediately from the assumptions and Theorem \ref{thm:comparison}. Indeed, $w_i \leq v^{i,\hat x, \epsilon}_i$ for any $i, \hat x,$ and $\epsilon$ and thus $w^\ast_i(0,\hat x) \leq \inf_{\epsilon} (v_i^{i,\hat x, \epsilon})^\ast(0,\hat x) =g_i(\hat x)$ (where the last equality is by assumption).

Concerning statement $(i)$, we first note that by the definition of $w^\ast_{i}$ there exists a sequence \begin{equation} \label{eq:sequence1}
(t_n, x_n, u_i^n(t_n,x_n)) \to (t,x,w_i^\ast(t,x))
\end{equation}
with each $u^n$ being a subsolution of \eqref{problem}. Assume now that $(a,p,X) \in \supjetbar w^\ast_i(\hat t,\hat x)$ for $(\hat t,\hat x) \in (0,T)\times\Omega$. Then, by the existence of the sequence \eqref{eq:sequence1} and the fact that $w_i^\ast$ is USC we get from Proposition 4.3 of \cite{CIL92} that there exists a sequence
\begin{equation*} 
(t_n,x_n, u^n_i, a_n, p_n, X_n) \to (t,x,w^\ast (t,x), a,p,X) \quad  \mbox{with} \quad (a_n,p_n,X_n) \in \supjetbar u^n_i(t_n,x_n).
\end{equation*}
Since $(\hat t,\hat x) \in (0,T)\times\Omega$ we will have $(t_n, x_n) \in (0,T)\times\Omega$ as well for $n$ large enough
and by the subsolution property of $u^n$ and the fact that $F$ is continuous we get
$$
a + F_i(t,x,w_i^\ast, p ,X) = \lim_{n \to \infty} \left( a_n + F_i(t_n,x_n,u^n_i(t_n,x_n),p_n,X_n) \right)\leq 0
$$
and thus $w^\ast$ satisfies $(i)$ above. If $(a,p,X) \in \supjetbar w^\ast_i(\hat t,\hat x)$ for $(\hat t, \hat x) \in (0,T)\times\partial \Omega$, analogous reasoning gives the existence of a sequence such that
$$
\left( a_n + F_i(t_n,x_n,u^n_i(t_n,x_n),p_n,X_n) \right) \wedge B_i(t_n,x_n, u_i^n(t_n,x_n), p_n) \leq 0
$$
for all $n$. Taking the limit as $n \to \infty$, now using also that $f_i$ is continuous, we conclude that also $(ii)$ holds. In particular, we then have $w^\ast=w$.

We now prove that $w_\ast$ is a supersolution following a classical argument by contradiction. More precisely, we show that if $w_\ast$ is \textit{not} a supersolution, then there exists a subsolution strictly greater than $w ^\ast$, contradicting the very definition of $w$.

Starting with the initial condition $(iii)$ we have $w_i (0,\hat x) \geq u_i^{i,\hat x, \epsilon} (0,\hat x)$ for all $\epsilon >0$ and thus $w_{\ast,i}(0,\hat x) \geq  \sup_\epsilon (u_i^{i,\hat x, \epsilon})_\ast (0,\hat x)= \sup_\epsilon u_i^{i,\hat x, \epsilon}(0,\hat x) =g_i(\hat x)$.
Assume now that $w_\ast$ is not a supersolution by violating Definition \ref{def:solution} $(i)$,
i.e., that for some $(\hat t,\hat x) \in (0,T)\times\Omega$ and $i \in \{1,\dots, m\}$ we have
\begin{align*}
&\min \{a+ F_i(\hat t, \hat x, w_{\ast,i}(\hat t, \hat x), p,X), w_{\ast,i}(\hat t,\hat x) - \M_iw_{\ast,i}(\hat t,\hat x)\}<0
\end{align*}
for some $(a,p,X) \in  \subjetbar w_{\ast,i}(\hat t, \hat x)$.
For $\delta >0$ and such $(a,p,X)$ fixed, construct the function
$$
\tilde w(t,x): =w_{\ast,i}(\hat t, \hat x) + \delta + a (t-\hat t) + \langle p,(x-\hat x)\rangle + \frac{1}{2} \langle X(x-\hat x),(x-\hat x)\rangle- \beta (|t-\hat t| + |x-\hat x|^2).
$$
Since $\M_i w_{\ast} \leq \M_i w^\ast$ it follows from continuity that the function $\tilde w(t,x)$ 
is a viscosity subsolution to
\begin{equation} \label{eq:satisfiesabove}
 \min \left\{ \partial _{t} \tilde w(t,x) + {F}_i\left(t,x,\tilde w(t,x), D\tilde w, D^{2}  \tilde w\right),  \tilde w(t,x)_{i}- \M_i w^\ast(t,x) \right\} =0
\end{equation}
for $(t,x) \in Q_R := \{(t,x) : |t-\hat t| + |x-\hat x|^2 <R\}$ and $\delta$, $\beta$ and $R$ sufficiently small.
(If $t \neq \hat t$, $\tilde w$ satisfies \eqref{eq:satisfiesabove} in the classical sense.
If $t=\hat t$, $\supjetbar \tilde w(t,x) = \{(a+\beta \eta, p, X) : \eta \in [-1,1]\}$
and the contribution from $\beta \eta$ is harmless if $\beta$ is small enough.)
By definition of $\subjetbar w_{\ast,i}(\hat t, \hat x)$ we have
\begin{align*}
w_i^\ast (t,x) &\geq w_{\ast,i}(t,x) \geq w_{\ast,i}(\hat t, \hat x) + a (t-\hat t) + \langle p, x-\hat x\rangle + \frac{1}{2} \langle X (x-\hat x), x-\hat x \rangle + o (|t-\hat t| + |x-\hat x|^2)  \\
&= \tilde w(t,x) - \delta + \beta (|t-\hat t| + |x-\hat x|^2) + o(|t-\hat t| + |x-\hat x|^2)
\end{align*}
and thus, if we let $\delta = \frac{\beta R}{4}$ and consider $(t,x) \in Q_R \setminus Q_{\frac{R}{2}}$, we get
\begin{align} \label{eq:nojump}
w_i^\ast (t,x) &\geq  \tilde w(t,x) - \frac{\beta R}{4} + \beta (|t-\hat t| + |x-\hat x|^2) + o(|t-\hat t| + |x-\hat x|^2) \notag \\
& \geq \tilde w(t,x) - \frac{\beta R}{4}  + \frac{\beta R}{2}  + o (R).
\end{align}

Now let $\check u = \{\check u_1, \dots, \check u_m\}$ where
$$
\check u_i =\begin{cases}
 \max\{w^{\ast}_i(t,x), \tilde w_i(t,x)\} & $\mbox{if $(t,x)\in Q_R$}$\\
w^\ast_i(t,x) &\mbox{otherwise}	\end{cases} \qquad \mbox{and} \qquad \mbox{$\check u_j (t,x)= w^\ast_j(t,x)$ if $j\neq i$}.
$$
Note that by \eqref{eq:nojump}, there is no jump in $\check u_i$ at $\partial Q_R$ if $R$ is small enough. Since $\check u_i \geq w^\ast_i$ we have
$$\check u_j - \M_j \check u \leq w^\ast_j - \M_j w^\ast$$
for any $j \neq i$. Recalling that $w^\ast_i$ is a subsolution, the above shows that $\check u$ is a subsolution to \eqref{problem} as well (after decreasing $R$ even further if necessary to ensure $Q_R \in (0,T)\times\Omega$).
We now note that, since $(w^\ast_i)_\ast = w_{\ast,i}$, there exists by definition a sequence $(t_n, x_n, w^\ast_i(t_n, x_n)) \to (\hat t, \hat x, w_{\ast,i}(\hat t, \hat x))$. If we follow this sequence we thus find that $\check u_i (t_n, x_n)  = \tilde w_i (t_n, x_n) > w^\ast_i$ for some point $(t_n,x_n)$ sufficiently close to $(\hat t, \hat x)$ (since $\tilde w_i \to w_{\ast,i} + \delta > w_{\ast,i}$). Thus, we have constructed a subsolution which is strictly greater than $w$,  a contradiction.

What remains is to consider if $w_\ast$ fails to be a supersolution by violating condition $(ii)$, i.e., if there exists $(\hat t, \hat x) \in (0,T)\times\partial \Omega$ and $(a,p,X) \in \subjetbar w_\ast(\hat t, \hat x)$ s.t.
\begin{align}\label{eq:boundarycontraassumption}
&\min \{a+ F_i(\hat t, \hat x, w_{\ast,i}(\hat t, \hat x), p,X), w_{\ast,i}(\hat t,\hat x) - \M_iw_{\ast,i}(\hat t,\hat x)\} \vee B_i(\hat t, \hat x, w_{\ast,i}(\hat t, \hat x),p)<0.
\end{align}
However, if \eqref{eq:boundarycontraassumption} holds, continuity of $f_i$ and the smoothness of $\partial \Omega$ gives that $B_i(t, x, \tilde w(t,x), p) \leq 0$ (in the classical sense and thus in the viscosity sense by Proposition 7.2 of \cite{CIL92}) for $(t,x) \in \partial \Omega$ and sufficiently close to $(\hat t, \hat x)$. We then conclude as above that there exists $\delta, \beta$, and $R$ s.t.
$\tilde w$ satisfies the subsolution property \eqref{eq:satisfiesabove} for $(t,x) \in (0,T)\times\Omega$ sufficiently close to $(\hat t, \hat x)$ and thus that $\check u$ is a subsolution. Again, we have constructed a subsolution which dominates $w^\ast$, contradicting the very definition of $w$. The proof is complete.
\end{proof}

\noindent \newline
{\bf Acknowledgement.}
Marcus Olofsson was financed in full and Niklas Lundstr\"om in part by the Swedish research council (grant no. 2018-03743). We gratefully acknowledge this support.


\end{document}